\documentclass[11pt,a4paper]{amsart}
\usepackage[english]{babel}
\usepackage{enumerate}
\usepackage{amsmath,amssymb}
\usepackage[dvips]{graphicx}
\usepackage[size=footnotesize]{caption}
\usepackage[size=footnotesize]{subcaption}

\usepackage{listings}

\usepackage{wasysym}
\usepackage{amssymb,amsfonts, color,epsf}
\usepackage{verbatim}
\usepackage{fullpage}\usepackage{bbm,amsmath,amssymb,amsfonts,graphicx,color,mathtools, verbatim}
   \usepackage[bookmarksopen=false,pdftex=true,breaklinks=true,%
      backref=page,pagebackref=true,plainpages=false,%
      hyperindex=true,pdfstartview=FitH,colorlinks=true,%
      pdfpagelabels=true,colorlinks=true,linkcolor=blue,%
      citecolor=red,urlcolor=green,hypertexnames=false%
      ]%
   {hyperref}
   \newcommand{\abb}[5]{%
\setlength{\arraycolsep}{0.4ex}%
\begin{array}{rcccc}%
#1 &:\,& #2 & \,\,\longrightarrow\,\, & #3 \\[0.5ex]%
     & & #4 & \longmapsto & #5%
\end{array}%
}

\newcommand{\N}{\mathbb{N}}

\newcommand{\R}{\mathbb{R}}
\newcommand{\C}{\mathbb{C}}

\newcommand{\X}{\underline{X}}
\renewcommand{\H}{\mathcal{H}}
\newcommand{\A}{\mathcal{A}}
\newcommand{\B}{\mathcal{B}}
\newcommand{\E}{\mathcal{E}}
\newcommand{\W}{\mathcal{W}}
\renewcommand{\P}{\mathcal{P}}
\renewcommand{\leq}{\leqslant}
\usepackage{faktor}

\usepackage{MnSymbol} 
\newtheorem{theorem}{Theorem}[section]
\newtheorem{lemma}[theorem]{Lemma}
\newtheorem{corollary}[theorem]{Corollary}
\newtheorem{proposition}[theorem]{Proposition}

\newtheorem{definition}[theorem]{Definition}

\newtheorem{remark}[theorem]{Remark}
\newtheorem{example}[theorem]{Example}
\usepackage[top=2cm, bottom=2cm, left=3cm, right=3cm]{geometry}

\DeclarePairedDelimiter\floor{\lfloor}{\rfloor}

\DeclareMathOperator\rank{rank}

\DeclareMathOperator{\Hom}{Hom}

\DeclareMathOperator{\Span}{span}

\DeclareMathOperator{\sDisc}{sDisc}

\DeclareMathOperator{\conv}{conv}

\DeclareMathOperator{\locextr}{locextr}
%% code from mathabx.sty and mathabx.dcl
\DeclareFontFamily{U}{mathx}{\hyphenchar\font45}
\DeclareFontShape{U}{mathx}{m}{n}{
      <5> <6> <7> <8> <9> <10>
      <10.95> <12> <14.4> <17.28> <20.74> <24.88>
      mathx10
      }{}
\DeclareSymbolFont{mathx}{U}{mathx}{m}{n}
\DeclareFontSubstitution{U}{mathx}{m}{n}
\DeclareMathAccent{\widecheck}{0}{mathx}{"71}

\title{Linear slices of Hyperbolic polynomials and positivity of symmetric polynomial functions}
\author{Cordian Riener \and Robin Schabert}
\address{Department of Mathematics and Statitics, UiT - the Arctic University of Norway, 9037 Troms\o, Norway}
\email{cordian.riener@uit.no}
\email{robin.schabert@uit.no}
\date{\today}

\begin{document}
\begin{abstract}
A real univariate polynomial of degree $n$ is called hyperbolic if all of its $n$ roots are on the real line. Such polynomials appear quite naturally in different applications, for example, in combinatorics and optimization. The focus of this article are families of hyperbolic polynomials which are determined through $k$ linear conditions on the coefficients. The coefficients corresponding to such a family of hyperbolic polynomials form a semi-algebraic set which we call a \emph{hyperbolic slice}. We initiate here the study of the geometry of these objects in more detail. The set of hyperbolic polynomials is naturally stratified with respect to the multiplicities of the real zeros and this stratification induces also a stratification on the hyperbolic slices. Our main focus here is on the \emph{local extreme points} of hyperbolic slices, i.e., the local extreme points of linear functionals, and we show that these correspond precisely to those hyperbolic polynomials in the hyperbolic slice which have at most $k$ distinct roots and we can show that generically the convex hull of such a family is a polyhedron. Building on these results, we give consequences of our results to the study of symmetric real varieties and symmetric semi-algebraic sets. Here, we show that sets defined by symmetric polynomials which can be expressed sparsely in terms of elementary symmetric polynomials can be sampled on points with few distinct coordinates. This in turn allows for algorithmic simplifications, for example,to verify that such polynomials are non-negative or that a semi-algebraic set defined by such polynomials is empty.
\end{abstract}
\maketitle

\section{Introduction}
A monic real univariate polynomial $f$ which has only real roots is classically called a hyperbolic polynomial. Such polynomials and their multivariate relatives appear naturally in various mathematical contexts from differential equations to combinatorics, real algebraic geometry and optimization (see for example \cite{gurvits2006hyperbolic,guler1997hyperbolic,marcus,branden}). By identifying monic polynomials of degree $n$ with the list of coefficients, one can describe hyperbolic polynomials of degree $n$ as a semi-algebraic subset of $\R^n$. We consider linear slices, i.e., intersections with linear subspaces, of this semi-algebraic set, which is in fact the closure of one connected component of the complement of the discriminant variety. The study of these \emph{hyperbolic slices} is inspired by the works of Arnold who considered families of hyperbolic polynomials where the first $k$ coefficients were fixed. Arnold \cite{arnol1986hyperbolic} and Givental \cite{givental1987moments} showed that these sets are topologically contractible (see also \cite{meguerditchian1992theorem,meguerditchian1991geometrie}) and have a rich geometric structure as was shown by Kostov \cite{kostov1989geometric} (see also \cite{kostov2002arrangements,kostov2007hyperbolic} for more related results). In a similar spirit to the works of Arnold and Meguerditchian we study the local extreme points of these sets (see Definition \ref{def:extreme}). In analogy to their result, we show in Theorem \ref{maintheorem} that these points correspond to hyperbolic polynomials with few distinct roots. Furthermore, we show in Theorem \ref{thm:2} that a generic hyperbolic slice only has finitely many local extreme points. This signifies in particular that the convex hull of each of its connected components is in fact a polyhedron. In contrast to the case considered by Arnold, our slices are in general not contractible and not compact. However, we are able to give some sufficient condition to decide if a hyperbolic slice is compact or has at least a local extreme point. 

\noindent One of our main interests for the study of these hyperbolic slices stems from an application to symmetric real polynomial functions, i.e., polynomial functions that are left invariant by any permutation of the variables. Real symmetric functions are related to hyperbolic polynomials via the so called \emph{Vieta map}: Recall that for $1\leq i \leq n$ the $i$-th elementary symmetric polynomial in $n$ variables is defined by
\[
%e_1:= X_1+\dots+X_n, ~ \dots ~ ,
e_i:=\sum_{1\leq j_1<j_2<\dots<j_i\leq n}X_{j_1}\cdots X_{j_i}.
%, ~ \dots ~, e_n:=X_1\cdots X_n 
\]
By Vieta's formula the coefficients of a univariate monic polynomial of degree $n$ are given by evaluating these elementary symmetric polynomials at the corresponding roots. Conversely, it is also classical that the roots depend continuously on the coefficients and the natural action of $S_n$ permuting the roots does not effect the coefficients. Therefore, the polynomial map from $\R^n$ to $\R^n$ defined by the above connection effectuates a homeomorphism from $\R^n/S_n$ to its image called the \emph{Vieta map}. Since it is classically known that every symmetric polynomial can be uniquely written as a polynomial in the elementary symmetric polynomials one can view real symmetric polynomial functions as functions on the image of the Vieta map. This connection between univariate monic polynomials and symmetric polynomials in $n$ variables gives rise to an application of our results on hyperbolic slices in the context of symmetric polynomial functions: We are interested in the question to what extend the global behavior of symmetric functions is determined by its behavior of symmetrical points or points with a large stabilizer. For example, several authors (e.g. \cite{keilson1967global,waterhouse1983symmetric}) have studied families of symmetric polynomials which attain their minimal values on symmetric points, i.e., points where all coordinates are equal. More generally, it has been shown that symmetric polynomial functions of a given degree $2d$ assume only non-negative values if and only if they have this property on point with at most $d$ distinct coordinates \cite{timofte2003positivity,riener2012degree}. To further this line of ideas, we introduce the notion of $k$-complete symmetric polynomial functions. Those are polynomial functions whose set of values is already obtained by evaluation only on points which have at most $k$ distinct coordinates (see Definition \ref{def:kcomplete}). Using the geometry of hyperbolic slices we are able to identify a new class of $k$-complete functions in Theorem \ref{thm:4} which is given by functions that are constant or linear along a hyperbolic slice (see Definition \ref{def:sprase} for the technical definition). The results we give  here also include the mentioned findings of \cite{timofte2003positivity,riener2012degree} which can be interpreted by saying that every symmetric polynomial of degree $d\geq 4$ is $\floor*{\frac{d}{2}}$-complete. 

\noindent The class of $k$-complete symmetric functions allows for significant algorithmic simplifications in several algorithmic tasks related to polynomial functions. For example, it is known (see \cite{murty1985some}) that checking if a real multivariate polynomial $f$ is non-negative is in general $NP$-hard, already in the case of polynomials of degree 4. However, as we discuss in this article, the complexity of verifying non-negativity for a $k$-complete symmetric polynomial can be drastically reduced if $k<n$, since the set of points that need to be considered is of dimension $k$. We highlight this and several related results in the second part of the article.

\paragraph{\bf Outline:} In Section \ref{sec:hs} we introduce the notion of hyperbolic slices as families of hyperbolic polynomials defined by linear conditions on the coefficients. Our main result in this section is that the local extreme points of such slices correspond to hyperbolic polynomials with few distinct roots (Theorem \ref{maintheorem}) and that generically there are only finitely many such local extreme points (Theorem \ref{thm:2}). Finally, we give sufficient criteria for the existence of such local extreme points in the cases when a slice is not compact.  In Section \ref{sec:positivity} we study symmetric polynomials which attain their minima on points with few distinct coordinates, i.e., on points with a non trivial and potentially large stabilizer. Our main results there (Theorem \ref{thm:4} and Corollary \ref{cor:thm:4}) provide a large class of such functions based on the results from Section \ref{sec:hs}. We furthermore highlight how to efficiently verify that a given symmetric polynomial satisfies the conditions needed to apply these results. The following Section \ref{sec:ex} highlights the applicability of our results. We show that our findings allow for simple proofs for different symmetric inequalities and also recover the mentioned known results. Furthermore, we in particular highlight in Theorem \ref{thm:1suff} a family of symmetric polynomials which attain their minimum on symmetric points. Finally,  we close with some concluding remarks and outlooks in Section \ref{sec:con}.

\paragraph{\bf Notation:} Throughout the article, we fix
$n\in \N$ and denote by 
$\R[\X] := \R[X_1,\dots,X_n]$
the polynomial ring in $n$ variables over $\R$.

\section{Hyperbolic slices}\label{sec:hs}
In this section we define and analyze the notion of a hyperbolic slice. To begin we formalize the notion of hyperbolic polynomials as used in the article.

\begin{definition}
We will denote by 
\[\H:= \left\{ z\in \R^n ~ \middle| ~ T^n-z_1 T^{n-1}+\dots+ (-1)^n z_n \text{ only has real roots} \right\}\]
the set of hyperbolic polynomials of degree at most $n$, and for $1\leq m\leq n$ the \emph{$m$-boundary} of $\H$
\[\H^m:= \left\{ z\in \H ~ \middle| ~ T^n-z_1 T^{n-1}+\dots+ (-1)^n z_n \text{ has at most $m$ distinct roots} \right\}.\]
\end{definition}

As described above we are interested in families of univariate monic hyperbolic polynomials whose coefficients are restricted by linear conditions. In order to define this more concretely, we fix throughout this section an integer $1\leq k\leq n$, a real point $a\in \R^k$, and a surjective linear map $L: ~ \R^n \longrightarrow \R^k$. This choice of a linear map and a point characterizes the linear conditions we aim to impose on hyperbolic polynomials and the \emph{hyperbolic slices} corresponding to these choices can be defined as follows.

\begin{definition}
With the notation introduced above, the \emph{hyperbolic slice} associated to $L$ and $a$ is the affine linear slice
\[\H_L(a):=\H \cap L^{-1}(a).\;\]
Furthermore, for $1\leq m\leq n$ we define by \[\H_L^m(a):=\H^m \cap L^{-1}(a),\] its restriction to the $m$-boundary.
\end{definition}
We briefly discuss one possible connection of the above definition to polynomial interpolation for which our results might be interesting in their own rights: For $k\in \N$ consider $a_1,b_1,\ldots, a_k,b_k\in \R$. Then the space of polynomials $f$ of degree $n$ which satisfy $f(a_i)=b_i$ for $1\leq i\leq k$ is called a polynomial interpolation space. Now, since evaluations at given points define linear maps, an interpolation problem for which one is interested in hyperbolic polynomials only constitutes one example of a hyperbolic slice defined above. 

 Clearly, the assumption that $L$ is surjective is only for convenience in the notation. As mentioned above the set of hyperbolic polynomials is tightly connected to the Vieta map.

\begin{remark}\label{rem:roots}
The set $\H$ of hyperbolic polynomials is the image of the so-called Vieta map
\[\abb{\Gamma}{\R^n}{\H}{x=(x_1,\ldots,x_n)}{(e_1(x),\ldots,e_n(x))},\]
and the restriction of $\Gamma$ to the polyhedral cone \[\W:=\{x\in\R^n\, |\, x_1\leq x_2\leq\ldots\leq x_n\}\] is a homeomorphism. In particular, the roots of a univariate polynomial depend continuously on its coefficients. 
$\H$ is in fact a basic closed semi-algebraic subset of $\R^n$. Clearly, $\H=\H^{n}\supset\H^{n-1}\supseteq\dots\supseteq \H^1$ and $\H^{n-1}$ is the topological boundary of $\H$. Furthermore, for $1\leq m\leq n$ the $m$-boundary $\H^m$ is the image of the union of the $m$-faces of $\W$ under $\Gamma$ and therefore of dimension $m$. For more details, we refer to \cite[Appendix V.4]{Whitney}.
\end{remark}
The next example shows one of the simplest situations of a hyperbolic slice obtained by fixing the first two coefficients of a monic polynomial of degree 4. 
\begin{example}
For $k\geq 2$ we can fix the first $k$ coefficients of a monic polynomial. The set of hyperbolic polynomials in such a family defines a hyperbolic slice and this setup corresponds to the situation studied by Arnold \cite{arnol1986hyperbolic} and Kostov \cite{kostov1989geometric}.
For example, we can consider $\H_L(0,-6)$, where
 \[\abb{L}{\R^4}{\R^2}{(z_1,z_2,z_3,z_4)}{(z_1,z_2)}.\] This choice yields the hyperbolic slice in the plane shown in Figure \ref{fig:slice1}.
\begin{figure}[h]
 \centering
 \includegraphics[width=0.3\textwidth]{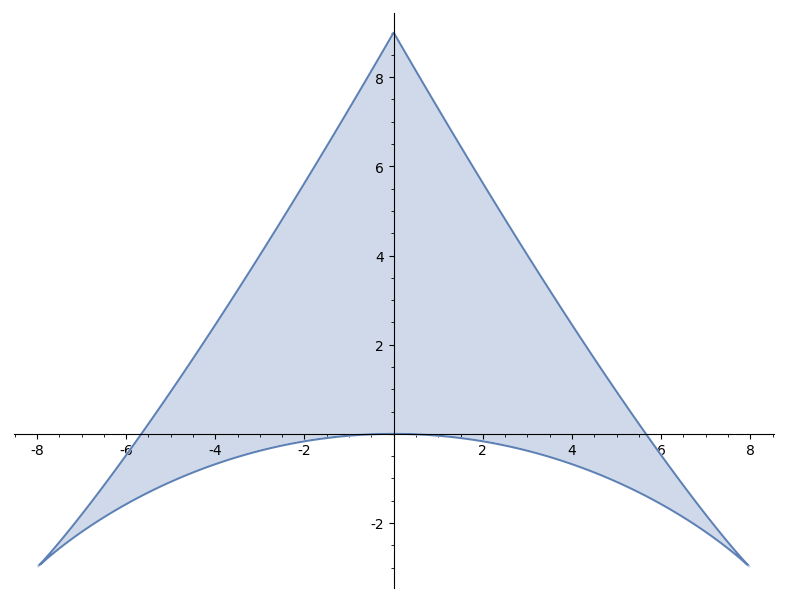}\vspace{-5mm}
 \caption{The hyperbolic slice $\H_L(0,-6)$}
 \label{fig:slice1}
\end{figure}
\end{example}
As can be seen from the example above, a hyperbolic slice is not convex but bears some resemblance to a polytope. By the connection via the Vieta map, we have that $\mathcal{H}$ is homeomorphic to the polyhedral cone $\mathcal{W}$. Furthermore, one finds three extreme points/ vertices in the above picture. For convex sets in $\R^n$ the extreme points contain important information about the set. To generalize this notion to the sets defined above, we will be interested in the following local notion of extreme points. 
 \begin{definition}\label{def:extreme}
Let $A\subseteq \R^n$. We call $z\in A$ a \emph{local extreme point} of $A$, if there is a neighborhood $U\subseteq \R^n$ of $z$ such that $z$ is an extreme point of $\conv(A\cap U)$. We denote the set of all local extreme points of $A$ by $\locextr (A)$.
\end{definition}
Classically, in convex optimization, the interest in extreme points stems from the fact that linear functions attain their minimum or maximum on these points. Similarly, the following holds for local extreme points.
\begin{remark}\label{rem:2}
Let $A\subseteq \R^n$, and $\varphi\in \Hom(\R^n,\R)$ and $z_\varphi\in A$ a (strict) local minimal point of $\varphi$ in $A$. Then $z_\varphi$ is also a local extreme point of $A$. Conversely, let $z\in A$ be a local extreme point of $A$, then there is $\varphi_z\in \Hom(\R^n,\R)$ such that $z$ is a local minimal point of $\varphi$ in $A$.
\end{remark}
\begin{example}
We more generally examine the local extreme points of the hyperbolic slices discussed above which are similar to the one in Figure \ref{fig:slice1}. We consider again the linear map 
\[\abb{L}{\R^4}{\R^2}{(z_1,z_2,z_3,z_4)}{(z_1,z_2)},\] and we examine local extreme points of the family of slices $\H_L(0,a)$, with $a\in \R$. Then we find that the local extreme points in this case are
\[\locextr(\H_L(0,a))=\H_L^2(0,a) =\left\{ \left(0,a, \pm \left(\sqrt{-\frac{2a}{3}}\right)^3,-\frac{a^2}{12}\right) , \left(0,a,0,\frac{a}{2}\right) \right\}.\]
By examining the resultants of the corresponding quartic polynomials and their second derivative, one finds that each of these local extreme points corresponds to hyperbolic polynomials with at most two distinct roots.

\end{example}

As a first result, we are now going to establish that the above example generalizes in the following sense. For a general hyperbolic slice, defined through $k$ linear conditions, the local extreme points can be characterized as hyperbolic polynomials of the $k$-boundary. This generalizes 
Theorem \cite[Theorem 4.2]{riener2012degree} to general hyperbolic slices. 

\begin{theorem}\label{maintheorem}
%For any $a\in \R^k$ 
The local extreme points of a hyperbolic slice are contained in the $k$-boundary, i.e., \[\locextr (\H_L(a)) \subseteq \H_L^k(a).\]
\end{theorem}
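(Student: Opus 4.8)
The plan is to argue by contraposition: I take a point $z \in \H_L(a)$ which is \emph{not} in the $k$-boundary, so the associated hyperbolic polynomial has at least $k+1$ distinct roots, and I produce a line segment inside $\H_L(a)$ with $z$ in its relative interior. This shows $z$ cannot be a local extreme point. By Remark~\ref{rem:roots}, write $z = \Gamma(x)$ with $x \in \W$, and let $r_1 < r_2 < \dots < r_s$ be the distinct roots, with $s \geq k+1$, appearing with multiplicities $m_1, \dots, m_s$. The key idea is to perturb the roots: consider the curve $t \mapsto \Gamma(x(t))$ where $x(t)$ is obtained from $x$ by replacing each block of equal coordinates $r_j$ by $r_j + t v_j$ for a vector $v = (v_1, \dots, v_s) \in \R^s$ to be chosen. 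For all small $t$ (positive or negative) the coordinates stay weakly ordered in the same pattern, so $x(t) \in \W$ and hence $\Gamma(x(t)) \in \H$; moreover the roots stay real, so we genuinely remain in $\H$.

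The constraint to be satisfied is $L(\Gamma(x(t))) = a$ for all $t$ near $0$. Here I would pass to first order: it suffices to find a nonzero $v \in \R^s$ such that the whole curve $\Gamma(x(t))$ stays in the affine subspace $L^{-1}(a)$, which by linearity of $L$ means the curve must lie in $z + \ker L$, a subspace of dimension $n-k$. Now the crucial observation is that as $v$ ranges over $\R^s$, the curves $t\mapsto \Gamma(x(t))$ — or rather, the functions one gets by moving the distinct root values independently — sweep out an $s$-dimensional family, and the relevant object to analyze is the derivative $\frac{d}{dt}\big|_{t=0}\Gamma(x(t))$. A clean way to see the dimension count: the map sending $(c_1, \dots, c_s)$ to the polynomial $\prod_j (T - c_j)^{m_j}$ is, near $(r_1, \dots, r_s)$, an immersion (its image is an $s$-dimensional submanifold of $\R^n$, since the $r_j$ are distinct — the Jacobian is essentially a confluent Vandermonde-type matrix which has full rank $s$). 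Call this $s$-dimensional tangent space $V \subseteq \R^n$.

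So I need $V \cap \ker L$ to contain a nonzero vector, equivalently $\dim(V) + \dim(\ker L) > n$, i.e. $s + (n - k) > n$, i.e. $s > k$ — which is exactly our hypothesis $s \geq k+1$. Picking a nonzero $w \in V \cap \ker L$, it is the velocity vector of the immersed surface for some direction $v \in \R^s$, and then the curve $t \mapsto \Gamma(x(t))$ with that $v$ can be reparametrized/corrected: actually, since moving the root locations gives literally the submanifold $\{\prod_j(T-c_j)^{m_j}\}$ and $L$ restricted to it has a critical point structure, I should instead directly note that the linear slice of this submanifold cut out by $L^{-1}(a)$ has dimension $\geq s - k \geq 1$ through $z$, hence contains a nonconstant arc $\gamma(t)$ through $z$ lying entirely in $\H_L(a)$; and since $\H$ locally near such a point (interior relative to having $s$ distinct roots) is a manifold-with-the-root-surface sitting inside, for small $t$ both $\gamma(t)$ and $\gamma(-t)$-type points on \emph{both sides} exist — more carefully, because the $r_j$ are all distinct, small perturbations in any direction $v \in \R^s$ (including $-v$) keep all roots real and distinct, so $z$ is the midpoint of a genuine segment in $\H_L(a)$, contradicting local extremality.

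The main obstacle I anticipate is the last point: ensuring that the perturbation produces a full line segment through $z$ (so $z$ is in the \emph{relative interior}, not an endpoint), rather than just a one-sided arc. This is where the distinctness of the roots $r_1, \dots, r_s$ is essential — it guarantees the perturbed configuration $r_j + tv_j$ is still weakly increasing and still yields a hyperbolic polynomial for \emph{all} small $t$ of either sign, so the image curve passes through $z$ transversally within $\H$. A secondary technical point is verifying the full-rank claim for the "confluent Vandermonde" Jacobian of $(c_1,\dots,c_s) \mapsto \prod_j(T-c_j)^{m_j}$ at distinct $c_j$; this is a standard computation (the differential sends $\partial_{c_j}$ to $-m_j (T-r_j)^{m_j - 1}\prod_{\ell \neq j}(T-r_\ell)^{m_\ell}$, and these $s$ polynomials are linearly independent since they have distinct root patterns), and I would relegate it to a short lemma or a one-line remark. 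Finally I should double check the boundary case $s = k+1$ gives $\dim \geq 1$, which it does, so the argument is tight and recovers the stated bound.
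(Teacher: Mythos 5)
Your contrapositive strategy (perturb a polynomial with $s>k$ distinct roots while staying in the slice) starts from the right idea, but there are two genuine gaps. First, your perturbation moves the root values, so the constraint $L(\Gamma(x(t)))=a$ is satisfied only to first order, and your repair --- ``the slice of the $s$-dimensional root stratum by $L^{-1}(a)$ has dimension $\geq s-k\geq 1$ through $z$'' --- is not automatic over $\R$: that intersection is the zero set of $k$ polynomial equations on an $s$-dimensional parameter space, and \emph{real} zero sets can have dimension smaller than $s-k$ (already $c_1^2+c_2^2=0$ cuts a single point out of $\R^2$). The implicit-function route would require $L$ restricted to your tangent space $V$ to have full rank $k$, which you neither argue nor is guaranteed; the tangent-level count $\dim(V\cap\ker L)\geq s-k$ by itself does not produce an actual arc inside $\H_L(a)$. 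Second, and more fundamentally, even granting a nonconstant arc through $z$ inside $\H_L(a)$, this does not contradict local extremality: a point of a strictly convex curve is an extreme point of the local convex hull of that curve (e.g. the origin on $t\mapsto(t,t^2)$; in coefficient space, $t\mapsto(2(r+t),(r+t)^2)$ is exactly such a curve of hyperbolic polynomials). To violate extremality you must write $z$ as a proper convex combination of \emph{other} points of $\H_L(a)\cap U$, e.g. exhibit a straight segment with $z$ in its relative interior; your final claim that ``$z$ is the midpoint of a genuine segment'' does not follow, since $\Gamma$ is nonlinear and $\Gamma(x(t))$, $\Gamma(x(-t))$ do not average to $z$.

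The paper's proof sidesteps both problems by perturbing \emph{linearly in coefficient space} rather than in root space: write $f=p\cdot q$ with $p$ the squarefree part of degree $m>k$, choose a polynomial $h$ of degree $<m$ whose product $g=hq$ has coefficient vector $c\in\ker L$ (possible precisely because $m>k$, a plain linear-algebra kernel argument with no transversality issue), and observe that $p\pm\varepsilon h$ stays hyperbolic for small $\varepsilon$ because $p$ has only simple real roots. Then $f\pm\varepsilon g=(p\pm\varepsilon h)q$ is hyperbolic, so $z\pm\varepsilon c\in\H_L(a)$ exactly (not just to first order), and $z$ is literally the midpoint of these two points, giving the desired contradiction. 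If you want to salvage your root-perturbation picture, you would still need to convert it into such an exact segment in $\ker L$; as it stands the proposal does not prove the theorem.
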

\begin{proof}
Let $z\in \H_L(a)$ be a local extreme point, i.e., there is a neighborhood $U$ of $z$ such that $z$ is an extreme point of $\conv(\H_L(a)\cap U)$.
We assume that $z\notin \H_L^k(a)$ and want to find a contradiction. To this end, we want to find $c\in \ker L$ non-zero such that $z\pm \varepsilon c \in \H_L(a)$ for all $\varepsilon >0$ small enough. 
Consider $f:=T^n-z_1T^{n-1}+\dots +(-1)^n z_n$ with distinct roots $x_1,\dots,x_m$ where $m>k$ and factor as follows:
\[f = \underbrace{\prod_{i=1}^m (T - x_i)}_{=:p}\cdot q,\]
where the set of zeros of $q$ contains only elements from $\{x_1,\dots,x_m\}$ and $q$ is of degree $n-m$. Write $q=T^{n-m}+q_1 T^{n-m-1}+\dots +q_{n-m}$ and define $q_0:=1$ and consider the linear map
\[\abb{\chi}{\R^m}{\R^n}{y}{\left(\sum_{i+j=1}q_i y_j,\dots,\sum_{i+j=n}q_i y_j\right)}.\]
Since $m>k$, there is $b\in \ker (L\circ \chi)\setminus \{0\}$. We define $h:=b_1T^{m-1}+\dots+b_{m}$ and $g:=h\cdot q=c_1T^{n-1}+\ldots +c_{n}\neq 0$, where $c=\chi (b)$ by construction and therefore $c\in \ker L$. Now, because $p$ has no multiple roots, $p \pm \varepsilon h$ is hyperbolic for $\varepsilon>0$ small enough: the roots depend continuously on the coefficients and complex roots come as conjugated pairs (see Remark \ref{rem:roots}). Hence
\[(p \pm \varepsilon h)\cdot q = f \pm \varepsilon h\cdot q = f \pm \varepsilon g\]
is hyperbolic for all $\varepsilon>0$ small enough, i.e., $z\pm \varepsilon c \in \H_L(a)$.
If we choose $\varepsilon >0$ small enough we can ensure also that $z\pm\varepsilon c \in U$. But then 
\[z = \frac{z + \varepsilon c + z - \varepsilon c}{2},\]
a contradiction to $z$ being an extreme point of $\conv(\H_L(a)\cap U)$.
\end{proof}
\begin{remark}
If the map $L$ is not surjective, one can obtain similar results by replacing $k$ with $\rank L$.
\end{remark}

In view of Remark \ref{rem:2} we get the following.
\begin{corollary}\label{corollarycompact}
Let %$\varphi\in \Hom(\R^n,\R)$ 
$g:\ \R^n\to \R$ be a linear or concave function  and consider the optimization problem
\[\min_{z\in \H_L(a)} g(z).\]
Let $M$ denote the set of minimizers of this problem. If $\H_L(a)$ is non-empty and compact, then we have $M \cap H_L^k(a) \neq \emptyset$. In particular $H_L(a)$ contains a point $z\in \H_L^k(a)$.
\end{corollary}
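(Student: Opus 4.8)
\emph{Proof plan.} The plan is to produce a single minimizer that is at the same time a local extreme point of $\H_L(a)$, and then to invoke Theorem \ref{maintheorem} (in the spirit of Remark \ref{rem:2}). First I would observe that, since $\varphi$ is continuous and $\H_L(a)$ is nonempty and compact, $\varphi$ attains its minimum on $\H_L(a)$, say with value $\mu$; hence $M$ is nonempty, and $M = \H_L(a) \cap \{z \in \R^n : \varphi(z) = \mu\}$ is closed and bounded, so compact.

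The subtlety here — which I expect to be the only real obstacle — is that an arbitrary point of $M$ need not be a local extreme point of $\H_L(a)$, since the minimizing set may be ``flat''. To circumvent this I would pass to an extreme point: as $M$ is nonempty and compact, $\conv(M)$ is a nonempty compact convex subset of $\R^n$ and hence has an extreme point $z^\ast$, which, being an extreme point of the convex hull of $M$, necessarily lies in $M \subseteq \H_L(a)$.

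It then remains to check that $z^\ast$ is a local extreme point of $\H_L(a)$; in fact I claim it is an extreme point of $\conv(\H_L(a))$, so that Definition \ref{def:extreme} is satisfied with $U = \R^n$. Suppose not, say $z^\ast = \lambda u + (1-\lambda) v$ with $u \neq v$ in $\conv(\H_L(a))$ and $0 < \lambda < 1$. Expanding $u$ and $v$ as convex combinations of elements of $\H_L(a)$ exhibits $z^\ast$ as a convex combination $\sum_i t_i w_i$ with all $w_i \in \H_L(a)$, $t_i > 0$, and $\sum_i t_i = 1$. Since $\varphi$ is linear and $\varphi(w_i) \geq \mu$ for each $i$, the equality $\mu = \varphi(z^\ast) = \sum_i t_i \varphi(w_i)$ forces $\varphi(w_i) = \mu$, i.e. $w_i \in M$ for all $i$; hence $u, v \in \conv(M)$, contradicting the extremality of $z^\ast$ in $\conv(M)$. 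Now Theorem \ref{maintheorem} yields $z^\ast \in \H_L^k(a)$, and since $z^\ast \in M$ this gives $M \cap \H_L^k(a) \neq \emptyset$; the final assertion follows because $M \subseteq \H_L(a)$.
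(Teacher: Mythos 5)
Your proof is correct and follows essentially the same route as the paper: exhibit a minimizer that is an extreme point of $\conv(\H_L(a))$ (hence a local extreme point of $\H_L(a)$) and conclude with Theorem \ref{maintheorem}. The only difference is that you spell out in detail the standard convexity fact the paper invokes in one line, by first taking an extreme point of $\conv(M)$ and then using linearity of $\varphi$ to upgrade it to an extreme point of $\conv(\H_L(a))$.
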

\begin{proof}
Since $\H_L(a)$ is compact, there is a minimizer $z\in M$ such that $z$ is an extreme point of the convex hull of $\H_L(a)$. In particular, $z$ is a local extreme point of $\H_L(a)$ and therefore on the $k$-boundary of $\H_L(a)$ by Theorem \ref{maintheorem}, i.e., $z\in M\cap \H_L^k(a)$.
\end{proof}

As can be observed in the example shown in Figure \ref{fig:slice1} connected components of hyperbolic slices appear to have a similarity to polytopes. They are not convex, but appear to be ``deflated'' polytopes. To make this a bit more concrete we show that a generic hyperbolic slice has only finitely many local extreme points. This in particular implies that their convex hull, or in fact the convex hull of each of its connected components, is a polytope. The proof uses elementary properties of subdiscriminants. The relevance of subdiscriminants for counting roots of real univariate polynomials is explained in \cite[Chapter 4]{Basu2003}.

\begin{definition} Let $f \in \R[T]$ be a monic polynomial of degree $n$ with roots $x_1,\dots,x_n$ in $\C$. Then the \emph{$(n-m)$-subdiscriminant}, $1\leq m \leq n$, of $f$ is defined as
\[\sDisc_{n-m}(f) = \sum_{\substack{I\subseteq\{1,\dots,n\}\\ |I|=m}} \prod_{\substack{i,j\in I\\ j>i}} (x_i-x_j)^2. \]
\end{definition}
\begin{remark}\label{rem:subdisc}
Each $(n-m)$-subdiscriminant of $f$ is defined above as a polynomial of degree $m(m-1)$ in terms of the roots of $f$. Noticing that each of the expressions is in turn symmetric in the roots, one immediately obtains that each subdiscriminant of $f$ can be expressed in the elementary symmetric polynomials evaluated at the roots, i.e., in the coefficients of $f$. Indeed, the subdiscriminants of $f$ can be obtained directly by minors of the Sylvester matrix - also called subresultants - of $f$ and $f'$. So the degree of each $(n-m)$-subdiscriminant expressed in the coefficients is $2m-2$ \cite[Proposition 4.27]{Basu2003}.
\end{remark}
\begin{proposition}\cite[Remark 4.6 and Proposition 4.50]{Basu2003}\label{prop:subdisc}
A monic polynomial $f \in \R[T]$ of degree $n$ has exactly $k$ distinct roots if and only if
\[\sDisc_0(f)=\dots=\sDisc_{n-k-1}(f)=0,~\sDisc_{n-k}(f)\neq 0.\]
Moreover, if and only if additionally
\[ \sDisc_{n-k}(f) > 0,\dots, \sDisc_{n-1}(f)>0,\]
then $f$ has only real roots.
\end{proposition}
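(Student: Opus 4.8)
The plan is to rephrase everything in terms of the real symmetric Hankel matrix of power sums. Write $s_p:=\sum_{i=1}^n x_i^p$ for the $p$-th power sum of the roots of $f$ (a polynomial in the coefficients of $f$, hence real, with $s_0=n$), put $H:=(s_{i+j})_{0\le i,j\le n-1}$, and let $H_m:=(s_{i+j})_{0\le i,j\le m-1}$ be its $m$-th leading principal submatrix. First I would establish the identity
\[\sDisc_{n-m}(f)=\det H_m\qquad(1\le m\le n).\]
This is a direct application of the Cauchy--Binet formula to $H_m=V^{\top}V$, where $V:=(x_i^{\,j})_{1\le i\le n,\ 0\le j\le m-1}\in\C^{n\times m}$: the $m\times m$ minors of $V$ are Vandermonde determinants $\prod_{i<i',\,i,i'\in I}(x_{i'}-x_i)$ indexed by the $m$-subsets $I\subseteq\{1,\dots,n\}$, and the sum of their squares is exactly the defining expression of $\sDisc_{n-m}(f)$. (Alternatively this identity, or the description via subresultants from Remark \ref{rem:subdisc}, may be quoted from \cite{Basu2003}.)

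Next, let $y_1,\dots,y_r\in\C$ be the \emph{distinct} roots of $f$ with multiplicities $\mu_1,\dots,\mu_r$, set $W:=(y_\ell^{\,j})_{0\le j\le n-1,\ 1\le\ell\le r}\in\C^{n\times r}$ and $D:=\operatorname{diag}(\mu_1,\dots,\mu_r)$. From $s_p=\sum_{\ell}\mu_\ell y_\ell^{\,p}$ we obtain $H=WDW^{\top}$ and, more generally, $H_m=W_mDW_m^{\top}$, where $W_m$ consists of the first $m$ rows of $W$. For the first equivalence: if $m>r$ then $\rank W_m\le r<m$, so $\det H_m=0$, i.e.\ $\sDisc_{n-m}(f)=0$; and if $m=r$ then $W_r$ is a square Vandermonde matrix in the distinct $y_\ell$, so $\det H_r=(\det W_r)^2\det D=\bigl(\textstyle\prod_{\ell<\ell'}(y_\ell-y_{\ell'})^2\bigr)\prod_\ell\mu_\ell\neq0$. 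Reading this for $m=k$ and for $m=k+1,\dots,n$ gives ``$f$ has exactly $k$ distinct roots $\Rightarrow$ $\sDisc_0(f)=\dots=\sDisc_{n-k-1}(f)=0$ and $\sDisc_{n-k}(f)\neq0$''; conversely, the latter forces $r\le k$ (since $\sDisc_{n-k}(f)\neq0$ but $\sDisc_{n-m}(f)=0$ for $m>r$) and $k\le r$ (since $\sDisc_{n-m}(f)=0$ for $m>k$ but $\sDisc_{n-r}(f)\neq0$), whence $r=k$.

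For the second equivalence we may assume $f$ has exactly $k$ distinct roots; then $\rank H=k$, because $W$ is an $n\times k$ Vandermonde matrix of full column rank and $D$ is invertible. If all roots of $f$ are real, then $W$ is real and $D$ is positive definite, so $H=WDW^{\top}\psd$; moreover for $m\le k$ the matrix $W_m\in\R^{m\times k}$ has full row rank $m$ (it contains an invertible $m\times m$ Vandermonde block on $m$ of the $y_\ell$), so $H_m=W_mDW_m^{\top}$ is positive definite and $\sDisc_{n-m}(f)=\det H_m>0$ for $m=1,\dots,k$. Conversely, assume $\sDisc_{n-1}(f),\dots,\sDisc_{n-k}(f)>0$, i.e.\ $\det H_1,\dots,\det H_k>0$. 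By Sylvester's criterion the leading block $H_k$ is positive definite; since $\rank H=k=\rank H_k$, the last $n-k$ rows of $H$ are linear combinations of the first $k$, and symmetry then forces $H=\bigl(\begin{smallmatrix}I_k\\ B^{\top}\end{smallmatrix}\bigr)H_k\bigl(\begin{smallmatrix}I_k& B\end{smallmatrix}\bigr)$ for a real matrix $B$, so $H\psd$. Finally, by Hermite's theorem the signature of $H$ equals the number of distinct \emph{real} roots of $f$; as $H\psd$ this signature equals $\rank H=k$, the total number of distinct roots, so every root of $f$ is real.

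The step I expect to require the most care is the linear-algebra input just used: a real symmetric matrix of rank $k$ whose leading principal minors of orders $1,\dots,k$ are all positive is positive semidefinite --- Sylvester's criterion alone only handles the definite case, so the rank hypothesis must be used to factor $H$ through $H_k$ as above. Invoking Hermite's theorem for the last implication is the one external fact I would rather cite than reprove; if a self-contained argument is preferred, one can instead exhibit, for each non-real root $y_\ell$, an explicit real vector $u$ with $u^{\top}Hu<0$ (contradicting $H\psd$), obtained from the $W^{\top}$-dual vector of the Vandermonde column associated with $y_\ell$ together with its complex conjugate.
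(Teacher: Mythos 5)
The paper does not prove this proposition at all; it is quoted directly from \cite[Remark 4.6, Proposition 4.50]{Basu2003}, so there is nothing internal to compare against. Your argument is a correct, self-contained reconstruction of essentially the standard proof behind those cited results: the identity $\sDisc_{n-m}(f)=\det H_m$ via Cauchy--Binet applied to $H_m=V^{\top}V$ is right (and holds over $\C$, which is all you need), the factorization $H_m=W_mDW_m^{\top}$ through the distinct roots correctly yields the rank statement and hence the first equivalence, and the second equivalence via Sylvester's criterion on $H_k$, the rank-$k$ factorization of $H$ through $H_k$ giving $H\succeq 0$, and Hermite's theorem (signature of the Hermite form $=$ number of distinct real roots) is sound; you rightly flag that the ``positive leading minors up to the rank $\Rightarrow$ PSD'' step needs the rank hypothesis and not bare Sylvester. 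Two cosmetic points: in the converse of the first equivalence you swapped the labels of the two inequalities (the reasons you give in the parentheses actually prove $k\le r$ and $r\le k$ respectively, not the other way around), but together they still give $r=k$; and note that $\det H_r=(\det W_r)^2\prod_\ell\mu_\ell$ need only be nonzero, not positive, since $\det W_r$ may be non-real --- your argument only uses nonvanishing there, so this is fine. Your route through power sums is the same Hermite-quadratic-form machinery that \cite{Basu2003} uses (there packaged via subresultants), so it buys a transparent, elementary proof of a fact the paper simply imports.
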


\begin{theorem}\label{thm:2}
The $k$-boundary $\H^k_L(a)$ of a generic hyperbolic slice is finite. In particular, a generic hyperbolic slice has only finitely many local extreme points. The number of those points is bounded by
\[\min\left\{2^{n-k}\frac{(n-1)!}{(k-1)!}, \binom{n}{k} \frac{(n-1)!}{(k-1)!}\right\}.\]
\end{theorem}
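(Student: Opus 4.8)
The plan is to replace the semi-algebraic set $\H^k_L(a)$ by an honest complex variety and read off both statements from Bézout's theorem. Put
\[Z_k:=\bigl\{z\in\C^n\ :\ T^n-z_1T^{n-1}+\dots+(-1)^nz_n\text{ has at most }k\text{ distinct complex roots}\bigr\}.\]
Applying Proposition~\ref{prop:subdisc} with each $j\leq k$ in place of $k$ gives $Z_k=V(\sDisc_0,\dots,\sDisc_{n-k-1})$, and since $\H^k$ consists of hyperbolic polynomials with at most $k$ distinct real — hence at most $k$ distinct complex — roots, one has $\H^k_L(a)\subseteq Z_k\cap L^{-1}(a)$. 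Over $\C$ one moreover has the stratification $Z_k=\bigcup_\mu\overline{S_\mu}$, the union running over partitions $\mu=(\mu_1\geq\dots\geq\mu_k)$ of $n$ into exactly $k$ positive parts, where $S_\mu$ is the image of
\[\psi_\mu\colon\C^k\longrightarrow\C^n,\qquad(t_1,\dots,t_k)\longmapsto\text{coefficient vector of }\textstyle\prod_{i=1}^k(T-t_i)^{\mu_i}.\]
The map $\psi_\mu$ is generically $\bigl(\prod_c m_c!\bigr)$‑to‑one onto $S_\mu$, where $m_c=\#\{i:\mu_i=c\}$, and each $\overline{S_\mu}$ is irreducible of dimension $k$; in particular $Z_k$ is pure of dimension $k$.

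For the finiteness claim, note that a generic linear surjection $\C^n\to\C^k$ restricts to a dominant — hence generically finite — map on the $k$‑dimensional variety $Z_k$: at a smooth point $p\in Z_k$ this only requires $\ker L$ to avoid the $k$‑plane $T_pZ_k$, a Zariski‑open condition on $L$. Consequently $Z_k\cap L^{-1}(a)$ is finite for $a$ outside a proper subvariety of $\C^k$, and the corresponding statement over $\R$ follows since the relevant bad loci are proper subvarieties defined over $\R$. Together with $\H^k_L(a)\subseteq Z_k\cap L^{-1}(a)$ and Theorem~\ref{maintheorem} (which gives $\locextr(\H_L(a))\subseteq\H^k_L(a)$) this establishes the first two sentences.

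It remains to prove the two bounds. For the first I apply Bézout to the explicit defining system: by Remark~\ref{rem:subdisc} the $n-k$ equations $\sDisc_0=\dots=\sDisc_{n-k-1}=0$ have degrees $2(n-1),2(n-2),\dots,2k$, while $L^{-1}(a)$ is cut out by $k$ equations of degree $1$, so, the common zero set being finite,
\[\#\bigl(Z_k\cap L^{-1}(a)\bigr)\ \leq\ \Bigl(\,\prod_{j=k}^{n-1}2j\,\Bigr)\cdot 1\ =\ 2^{\,n-k}\,\frac{(n-1)!}{(k-1)!}.\]
For the second I use instead that a linear space meets a variety $X$ in at most $\deg X$ points whenever the intersection is finite, so $\#(Z_k\cap L^{-1}(a))\leq\deg Z_k=\sum_\mu\deg\overline{S_\mu}$. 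Since $\prod_{i=1}^k(1+t_ix)^{\mu_i}$ shows that every coefficient of $\prod_i(T-t_i)^{\mu_i}$ has degree at most $\mu_i$ in $t_i$, the multihomogeneous Bézout bound with respect to the $k$ one‑variable groups $t_1,\dots,t_k$ gives $\#\{t:\psi_\mu(t)\in\Lambda\}\leq k!\,\mu_1\cdots\mu_k$ for a generic affine subspace $\Lambda$ of codimension $k$, hence $\deg\overline{S_\mu}\leq k!\,\mu_1\cdots\mu_k/\prod_c m_c!$ after dividing by the fibre size. Summing over all partitions $\mu$ of $n$ with $k$ parts equals $\sum_\lambda\lambda_1\cdots\lambda_k$, the sum over all compositions $\lambda$ of $n$ into $k$ positive parts, which by the generating function $\bigl(x/(1-x)^2\bigr)^k$ equals $\binom{n+k-1}{2k-1}$; finally $\binom{n+k-1}{2k-1}\leq\binom{n}{k}\frac{(n-1)!}{(k-1)!}$ is elementary — clearing factorials it becomes $\binom{2k-1}{k}\geq\tfrac{(n+k-1)!}{n!\,(n-1)!}$, and the right‑hand side is decreasing in $n$ with value equal to the left‑hand side at $n=k$.

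The step I expect to be the main obstacle is making "generic" precise throughout: fixing the parameter space for $(L,a)$ and checking that the locus where $Z_k\cap L^{-1}(a)$ is infinite, together with the loci where the transversality and generic‑fibre statements underlying the two degree estimates break down, are proper subvarieties of it. The remaining ingredients — the degrees of the subdiscriminants, the bound on the degrees of the $\overline{S_\mu}$, the generating‑function identity, and the closing binomial inequality — are then routine.
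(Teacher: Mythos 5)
Your argument is correct, and it splits into a part that mirrors the paper and a part that is genuinely different. The finiteness statement and the first bound are essentially the paper's own route: the paper notes that $\H^k$ has dimension $k$ (so a generic codimension-$k$ affine slice meets it in finitely many points) and applies B\'ezout to the system $\sDisc_0=\dots=\sDisc_{n-k-1}=0$ of degrees $2n-2,\dots,2k$ together with the $k$ linear equations, exactly as you do after complexifying; your complexification adds nothing essential here, though note that the equality $Z_k=V(\sDisc_0,\dots,\sDisc_{n-k-1})$ over $\C$ needs the subresultant/gcd interpretation of the subdiscriminants (the root-sum formula alone gives only one inclusion over $\C$) -- harmless, since only the inclusion $\H^k\subseteq V(\sDisc_0,\dots,\sDisc_{n-k-1})$ and the stratification are actually used. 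For the second bound the paper argues differently: it applies the weighted B\'ezout theorem with weights $\deg e_i=i$, using that the subdiscriminants are weighted homogeneous of degrees $n(n-1),\dots,(k+1)k$ and that the $k$ affine equations can be arranged to have weighted degrees at most $n,n-1,\dots,n-k+1$, which yields $\binom{n}{k}\frac{(n-1)!}{(k-1)!}$ directly. You instead bound the degree of the coincident-root locus via its stratification by multiplicity partitions $\mu\vdash_k n$, estimate $\deg\overline{S_\mu}\leq k!\,\mu_1\cdots\mu_k/\prod_c m_c!$ by a multihomogeneous B\'ezout count on the parametrization (this is in fact Hilbert's classical formula for these strata), and sum via the generating function to get the sharper intermediate bound $\binom{n+k-1}{2k-1}$, which you then correctly compare with the stated bound (your monotonicity check of $(n+k-1)!/(n!(n-1)!)$ is valid since $n\geq k$ gives $n^2\geq k$). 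What each approach buys: yours produces a strictly better bound, namely the actual degree $\binom{n+k-1}{2k-1}$ of the $k$-boundary variety, at the price of more machinery (stratification, generic fibre counts, multihomogeneous B\'ezout and the genericity bookkeeping you flag yourself); the paper's weighted B\'ezout is shorter, works directly with the defining equations, and is the version that supports the refinement mentioned in the subsequent remark, where $\binom{n}{k}$ can be replaced by $1$ when $L$ fixes the first $k$ coefficients.
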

\begin{proof}
First, we establish that for a generic hyperbolic slice the $k$-boundary $\H^k_L(a)$ is finite. 
For this recall that the set of hyperbolic polynomials with at most $k$ distinct roots, $\H^k$, is of dimension $k$ by Remark \ref{rem:roots}. 
Therefore, a generic $(n-k)$-dimensional affine linear subspace will intersect $\H^k$ in only finitely many points. Furthermore, in view of Proposition \ref{prop:subdisc} we see further that $\H^k$ is contained in the algebraic set defined by the vanishing of $n-k$ polynomials. On the one hand, each of the subdiscriminants describing this algebraic set is a homogeneous polynomial of degree $(2n-2), (2n-4),\ldots,(2k)$ expressed in the elementary symmetric polynomials by Remark \ref{rem:subdisc} and we can apply Bézout's Theorem to obtain the bound 
\[2^{n-k}\frac{(n-1)!}{(k-1)!}.\]
On the other hand, we can apply the weighted Bézout's Theorem (see \cite[chapter VIII]{Mondal2021}): We assign to the $i$-th elementary symmetric polynomial $e_i$ the weight $i$. Then each subdiscriminant is weighted homogeneous of degree $n(n-1),(n-1)(n-2),\dots,(k+1)k$. Indeed, this is exactly the degree of the subdiscriminants expressed in the roots. Furthermore, we can bound the weighted degree of each of the $k$ affine hyperplanes describing our slice by $n,n-1,\dots,n-k+1$. So we obtain the bound
\[\frac{1}{n!}\frac{n!}{(n-k)!}\cdot \frac{n!(n-1)!}{k!(k-1)!}=\binom{n}{k} \frac{(n-1)!}{(k-1)!}.\] 
\end{proof}

\begin{remark}
The second bound obtained in \ref{thm:2} by the weighted Bézout's Theorem can even be refined, when one considers the coefficients appearing in $L(z)$ for $z\in \H$. For example, if just the first coefficients are fixed, i.e., $L(z)=(z_1,\dots,z_k)$, then $\binom{n}{k}$ can be replaced by $1$.
\end{remark}

Since the extreme points of the convex hull of a set are local extreme points, we can deduce the following.

\begin{corollary}
The convex hull of a generic hyperbolic slice is a polyhedron. The same applies to any of its connected components.
\end{corollary}

Note that the proof of Theorem \ref{thm:2} together with Proposition \ref{prop:subdisc} gives an explicit description of the $k$-boundary of a hyperbolic slice as a semi-algebraic set. The following example shows that the $k$-boundary of a hyperbolic slice can be infinite. But even in this case, there might only be finitely many local extreme points.

\begin{example}
Consider $L:~\R^4\to \R^3, ~ (z_1,z_2,z_3,z_4)\mapsto (z_1,z_3,z_4)$ and $a\in \R$. Then
\[\H_L(a,0,0)=\left\{ (a,z_2,0,0) ~\middle|~ z_2\in \R, ~z_2\leq \frac{a^2}{4} \right\} = \H_L^3(a,0,0)\]
is not finite. But $\H_L(a,0,0)$ is obviously convex with only local extreme point
\[\left(a,\frac{a^2}{4},0,0\right)\in \H_L^2(a,0,0).\]

\end{example}

Next, we will give sufficient conditions on $L$ for the compactness of a hyperbolic slice and for the existence of local extreme points. For that, we will need the following definition.

\begin{definition}
Let $f,g\in \R[T]$ be hyperbolic polynomials with real roots $\alpha_n \leq\cdots \leq \alpha_1$ and $\beta_{m} \leq\ldots\leq \beta_1$ respectively. We say that $g$ \emph{interlaces} $f$ if $\alpha_n\leq\beta_{m}\leq\alpha_{n-1}\leq\ldots\leq \alpha_1$ or $\beta_m\leq\alpha_n\leq\beta_{m-1}\leq\ldots\leq \alpha_1$. Furthermore, we say $f$ and $g$ are \emph{interlacing}, if $f$ interlaces $g$ or $g$ interlaces $f$.
\end{definition}

\begin{remark}
If $g$ interlaces $f$, then clearly $f$ and $g$ either have the same degree, i.e., $n=m$ or the degree of $g$ is smaller by one, i.e., $m=n-1$. 
\end{remark}
The following classical result (see \cite[Theorem 4.1.]{dedieu1992obreschkoff}) connects interlacing polynomials to linear pencils of hyperbolic polynomials.
\begin{theorem}[Dedieu]\label{dedieu}
Let $f,g\in \R[T]$ be hyperbolic, non-zero polynomials of degree at most $n$. Then the following statements are equivalent:
\begin{enumerate}
 \item $f$ and $g$ are interlacing.
 \item $f+\xi\cdot g$ is hyperbolic for any $\xi\in \R$.
\end{enumerate}
\end{theorem}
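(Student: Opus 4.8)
The plan is to establish the two implications separately, using throughout that hyperbolicity is a closed condition because the roots of a univariate polynomial depend continuously on its coefficients (Remark \ref{rem:roots}), together with an intermediate-value / sign-counting argument. Relabel $f,g$ so that $n:=\deg f\geq\deg g$, and write $\alpha_n\leq\cdots\leq\alpha_1$ for the roots of $f$ and $p_\xi:=f+\xi g$.

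For the implication (1) $\Rightarrow$ (2) I would first treat the ``strict'' case $\gcd(f,g)=1$: then the interlacing hypothesis forces all roots of $f$ and of $g$ to be simple and the interlacing to be strict, say $\alpha_n<\beta_{n-1}<\alpha_{n-1}<\cdots<\beta_1<\alpha_1$ when $\deg g=n-1$, the equal-degree case being analogous. Fixing $\xi\in\R\setminus\{0\}$, evaluation at the roots of $f$ gives $p_\xi(\alpha_i)=\xi\,g(\alpha_i)$; since exactly one root of $g$ lies in each interval $(\alpha_{i+1},\alpha_i)$, the sign of $g(\alpha_i)$ strictly alternates with $i$, so $p_\xi$ changes sign on each of the $n-1$ intervals $(\alpha_{i+1},\alpha_i)$ and has $n-1$ real roots there. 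Comparing the sign of $p_\xi(\alpha_1)$, resp.\ $p_\xi(\alpha_n)$, with the sign of $p_\xi$ near $+\infty$, resp.\ $-\infty$, which is governed by the leading term of $f$ because $\deg g\leq\deg f$, produces the remaining root, so $p_\xi$ has $\deg p_\xi$ real roots and is hyperbolic. The general case then follows by approximating $(f,g)$ by strictly interlacing hyperbolic pairs $(f_t,g_t)$, applying the strict case to each $f_t+\xi g_t$, and letting $t\to 0$.

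For the implication (2) $\Rightarrow$ (1), suppose $p_\xi$ is hyperbolic for every $\xi\in\R$. If $\deg f-\deg g\geq 2$, then for $|\xi|$ large the $\deg f-\deg g$ roots of $p_\xi$ escaping to infinity satisfy $t^{\deg f-\deg g}\approx-\xi\,\mathrm{lc}(g)/\mathrm{lc}(f)$, and for a suitable sign of $\xi$ not all of them are real, a contradiction; hence $\deg g\in\{n-1,n\}$, as interlacing demands. Next I would reduce to $\gcd(f,g)=1$: writing $d:=\gcd(f,g)$, which is itself hyperbolic, the pencil factors as $p_\xi=d\cdot(\tilde f+\xi\tilde g)$, so $\tilde f+\xi\tilde g$ is hyperbolic for all $\xi$ and $\tilde f,\tilde g$ interlace if and only if $f,g$ do. Assuming now $\gcd(f,g)=1$, the case $\xi=0$ shows $f$ has simple roots $\alpha_n<\cdots<\alpha_1$ with $g(\alpha_i)\neq 0$. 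The crux is to show that $\mathrm{sign}\,g(\alpha_i)$ strictly alternates with $i$: if instead $g(\alpha_{i+1})$ and $g(\alpha_i)$ shared a sign, then, since $\mathrm{sign}\,f'(\alpha_j)=(-1)^{j-1}\mathrm{sign}\,\mathrm{lc}(f)$ and $W(\alpha_j)=f'(\alpha_j)g(\alpha_j)$ for the Wronskian $W:=f'g-fg'$, one sees that $W$ takes opposite signs at $\alpha_{i+1}$ and $\alpha_i$, hence $W(\tau)=0$ for some $\tau\in(\alpha_{i+1},\alpha_i)$; then $\xi^*:=-f(\tau)/g(\tau)$ makes $\tau$ a double root of $p_{\xi^*}$ because $p_{\xi^*}'(\tau)=W(\tau)/g(\tau)=0$, and expanding $p_\xi=p_{\xi^*}+(\xi-\xi^*)g$ near $\tau$ shows the two roots of $p_\xi$ near $\tau$ become non-real for $\xi$ on one side of $\xi^*$, contradicting hyperbolicity. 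Once the alternation is known, the intermediate value theorem places a root of $g$ in each of the $n-1$ intervals $(\alpha_{i+1},\alpha_i)$, and since $\deg g\in\{n-1,n\}$ these are all of $g$'s roots, or all but one, which must then lie outside $[\alpha_n,\alpha_1]$; either way $f$ and $g$ interlace.

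The conceptual steps above are clean; the main obstacle is the bookkeeping in the degenerate configurations — multiple roots of $f$ or $g$, common roots, and the subcase where the $\tau$ produced above is also a zero of $g$ or a root of $p_{\xi^*}$ of multiplicity greater than two. These are best handled by a careful analysis of multiplicities, or by perturbing to the strict case while controlling which real linear combinations stay (non-)hyperbolic, and making that reduction uniform is the delicate part.
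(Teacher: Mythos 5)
The paper does not actually prove Theorem \ref{dedieu}; it is quoted as a classical result with a pointer to Dedieu's paper (Theorem 4.1 there, essentially the Hermite--Kakeya--Obreschkoff theorem), so your argument can only be measured against the standard proofs. Your strategy is indeed the standard one, and the direction (1) $\Rightarrow$ (2) is essentially sound: in the coprime case the sign of $p_\xi$ alternates at the $n$ roots of $f$, giving $n-1$ real roots in the gaps; note, though, that your endpoint comparison ``governed by the leading term of $f$'' fails when $\deg g=\deg f$ (the leading coefficient of $p_\xi$ is $\mathrm{lc}(f)+\xi\,\mathrm{lc}(g)$ and may vanish or change sign), and it is cleaner to finish by parity: a real polynomial of degree at most $n$ with at least $n-1$ distinct real roots has no room for a conjugate pair of non-real roots. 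The reduction of weak interlacing to strict interlacing by perturbing the common chain of roots and using closedness of hyperbolicity under coefficientwise limits is fine.

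The genuine gap is in (2) $\Rightarrow$ (1), and you partly acknowledge it. Concretely: (i) the assertion ``the case $\xi=0$ shows $f$ has simple roots'' is not a proof --- $\xi=0$ only gives hyperbolicity of $f$, and coprimality only gives $g(\alpha_i)\neq 0$; simplicity of the roots of $f$ uses the full hypothesis and needs an argument. (ii) The subcases $g(\tau)=0$ and $\tau$ a root of $p_{\xi^*}$ of multiplicity at least $3$ are left open, and the suggested fallback of ``perturbing to the strict case'' does not work as stated, because the hypothesis that $f+\xi g$ is hyperbolic for all $\xi$ is a closed, not open, condition: an arbitrarily small perturbation of $(f,g)$ can destroy it, so there is no evident uniform reduction. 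All of these holes can be plugged by isolating the local lemma you already use implicitly: if $q+\xi r$ is hyperbolic for all $\xi$ in a neighbourhood of $0$ and $q$ has a real root $\tau$ of multiplicity $m\geq 2$ with $r(\tau)\neq 0$, then the $m$ roots of $q+\xi r$ near $\tau$ are approximately $\tau$ plus the $m$-th roots of $-\xi r(\tau)/c$ (with $c$ the leading Taylor coefficient of $q$ at $\tau$), and for a suitable sign of $\xi$ at least one of them is non-real, a contradiction. Applied with $(q,r)=(f,g)$ at $\xi=0$ this yields (i); with $(q,r)=(p_{\xi^*},g)$ it covers multiplicity $2$ and $\geq 3$ alike; and in the subcase $g(\tau)=0$ one has $f(\tau)\neq 0$ and, from $W(\tau)=0$, also $g'(\tau)=0$, so $\tau$ is a multiple root of $g$ and the same lemma applied to $(q,r)=(g,f)$, i.e. to $\frac{1}{\xi}p_\xi=g+\xi^{-1}f$ for $|\xi|$ large, gives the contradiction. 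Until these steps are written out, what you have is a correct sketch of the classical argument rather than a complete proof.
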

From now on we express $L$ in terms of $k$ linearly independent linear forms $l_1,\dots,l_k \in \R[Z_1,\dots,Z_n]_1$ as $L:~ \R^n\to \R^k, ~ z \mapsto (l_1(z),\dots,l_k(z))$. We can use the results above to give a sufficient condition on $l_1,\dots,l_k$ for the existence of local extreme points of a hyperbolic slice.

\begin{lemma}\label{lemmalocalextremepoint}
If $Z_1\in \Span(l_1,\dots,l_k)$ and $\H_L(a)\neq \emptyset$, then $\H_L(a)$ has a local extreme point.
\end{lemma}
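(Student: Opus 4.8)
The plan is to reduce to a one-dimensional optimization problem along a well-chosen linear direction. Since $Z_1 \in \Span(l_1,\dots,l_k)$, the value $z_1$ is constant on the slice $\H_L(a)$; say $z_1 \equiv s$ for all $z \in \H_L(a)$. Recall that $z_1$ is (up to sign) the first elementary symmetric polynomial $e_1$ evaluated at the roots, so every polynomial $f = T^n - z_1 T^{n-1} + \dots + (-1)^n z_n$ corresponding to a point of $\H_L(a)$ has the sum of its roots equal to $s$. I claim that the linear functional $\varphi(z) := z_2$ (the second coordinate, corresponding to $\pm e_2$ of the roots) attains a minimum on $\H_L(a)$, and that any minimizer is a local extreme point by Remark \ref{rem:2}. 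The key observation is that $e_2 = \frac{1}{2}(e_1^2 - \sum x_i^2)$, so on the slice, where $e_1 = s$ is fixed, minimizing $z_2 = e_2(x)$ is equivalent to \emph{maximizing} $\sum_{i=1}^n x_i^2$; conversely the sum of squares is bounded below by $0$, so $z_2$ is bounded above on $\H_L(a)$, but that is not yet what I want. Instead I use that $z_2$ is bounded \emph{below}: from $(\sum x_i)^2 \leq n \sum x_i^2$ one gets $\sum x_i^2 \geq s^2/n$, hence $z_2 = \frac{1}{2}(s^2 - \sum x_i^2) \leq \frac{s^2}{2} - \frac{s^2}{2n}$, so $z_2$ has a finite supremum; to get a finite infimum I instead observe $\sum x_i^2 \geq 0$ forces $z_2 \leq s^2/2$ while $z_2 \to -\infty$ is possible — so I should maximize $z_2$ rather than minimize it. Let me restate: take $\varphi(z) = -z_2$; then $\varphi$ is bounded below on $\H_L(a)$ since $z_2 \leq s^2/2$ there.

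So the second step is to argue that the supremum of $z_2$ over $\H_L(a)$ is actually \emph{attained}. This is where I must work: a supremum of a continuous function need not be attained on a non-compact set. Here I would use the interlacing/pencil machinery. Take any $z^{(0)} \in \H_L(a)$ with corresponding polynomial $f_0$. For the constant polynomial $g = 1$ (degree $0$, hyperbolic, trivially interlacing with anything of higher degree after a shift) — more precisely, consider moving in the direction of $c = \Gamma$-image perturbations that keep $L(z)$ fixed; but the cleanest route is: the fiber $L^{-1}(a) \cap \H$ projected to the $(z_2,\dots,z_n)$-coordinates is a closed semi-algebraic set on which $z_2$ is bounded above, and I want to show the "slice at the top value $z_2 = c^*$" is nonempty. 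Since $\H$ is closed and $L^{-1}(a)$ is closed, $\H_L(a)$ is closed; if additionally the set $\{z \in \H_L(a) : z_2 \geq c^* - 1\}$ were bounded (hence compact), then the supremum is attained and Corollary \ref{corollarycompact} (or just Remark \ref{rem:2}) finishes it. To see boundedness: with $e_1 = s$ fixed and $e_2$ bounded in $[c^*-1, s^2/2]$, the quantity $\sum x_i^2 = s^2 - 2 e_2$ is bounded, so all roots $x_i$ lie in a fixed compact interval; then all the remaining elementary symmetric functions $e_3,\dots,e_n$ of the roots are bounded, i.e. $z_3,\dots,z_n$ are bounded. Hence that sublevel-type set is compact and nonempty, the maximum of $z_2$ on it (equivalently on all of $\H_L(a)$) is attained at some $z^*$, and $z^*$ is a strict local maximizer of $z_2$ restricted to a neighbourhood, hence a local extreme point of $\H_L(a)$ by Remark \ref{rem:2}.

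The main obstacle is precisely the compactness argument in the middle: one must notice that fixing $e_1$ \emph{and} bounding $e_2$ pins down $\sum x_i^2$, which forces the roots into a compact set and thereby bounds \emph{all} coefficients — this is the step that genuinely uses the hypothesis $Z_1 \in \Span(l_1,\dots,l_k)$ (if only, say, $e_2$ were fixed but not $e_1$, the roots could run off to infinity in a balanced way). A clean way to package this: define $r(z) := s^2 - 2 z_2 = \sum_{i} x_i(z)^2 \geq 0$ on $\H_L(a)$, note $r$ is a nonnegative continuous function, and on the closed set $\H_L(a) \cap \{r \leq r_0\}$ (which is compact by the root-localization argument above, and nonempty for $r_0$ large) the function $r$ attains its maximum, which is also its global maximum on $\H_L(a)$; the maximizer is the desired local extreme point. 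I would also remark that this gives a slightly stronger conclusion, namely that $\H_L(a)$ has a local extreme point lying on the $k$-boundary $\H_L^k(a)$, by combining with Theorem \ref{maintheorem}.
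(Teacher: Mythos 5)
Your reduction to maximizing $z_2$ over $\H_L(a)$ is the same functional the paper uses, and your compactness argument for attainment is correct and even fills in a detail the paper leaves implicit: since $Z_1\in\Span(l_1,\dots,l_k)$ fixes $z_1=e_1(x)=s$ on the slice, bounding $z_2=e_2(x)$ from below bounds $\sum_i x_i^2=s^2-2z_2$, hence the roots, hence all coefficients, so the superlevel set $\H_L(a)\cap\{z_2\ge c^*-1\}$ is compact and the supremum $c^*$ is attained. The genuine gap is the final step: from ``$z^*$ is a maximizer of $z_2$'' you jump to ``$z^*$ is a \emph{strict} local maximizer'' and invoke Remark \ref{rem:2}. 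Nothing you have proved gives strictness, and in general it is false: the maximizer set $M$ need not be a point. For example, if $Z_2$ also lies in $\Span(l_1,\dots,l_k)$ (say $l_1=Z_1$, $l_2=Z_2$, the Arnold-type slices), then $z_2$ is constant on $\H_L(a)$, every point of the slice is a non-strict maximizer, and most of these are certainly not local extreme points. A non-strict maximizer of a linear functional need not be a local extreme point (the midpoint of a segment on which the functional is constant is the basic obstruction), which is precisely why Remark \ref{rem:2} carries the qualifier ``strict''.

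The hole is fixable with the compactness you already established: $M$ is a closed subset of the compact superlevel set, hence compact and nonempty, so $\conv(M)$ has an extreme point $m\in M$; if $m$ were a proper convex combination of points of $\H_L(a)\cap U$, linearity and maximality of $z_2$ at $m$ would force all those points into $M$, and extremality of $m$ in $\conv(M)$ would force them to equal $m$, so $m\in\locextr(\H_L(a))$. The paper closes the same hole by a different device: assuming $\H_L(a)$ has no local extreme point, it argues that $M$ must contain a line $m+\xi y$ with $y_1=y_2=0$, and then Dedieu's Theorem \ref{dedieu} would make the degree-$n$ polynomial attached to $m$ interlace a polynomial of degree at most $n-3$, which is impossible. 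Either repair works, but as written your proof does not bridge ``maximizer'' to ``local extreme point''.
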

\begin{proof}
Let $z \in \H_L(a)$ and write $Z_1=\sum_{i=1}^k \lambda_i l_i$ for some $\lambda_1,\dots,\lambda_k \in \R$. Furthermore, denote by $x=(x_1,\dots,x_n)\in \R^n$ the roots of 
\[f_z := T^n - z_1 T^{n-1} + \cdots +(-1)^n z_n.\] 
Then $e_1(x) = z_1 =\sum_{i=1}^k \lambda_i l_i(z)=\sum_{i=1}^k \lambda_i a_i$ and hence
\[z_2 = e_2(x) = \frac{1}{2}\left(e_1(x)^2 - \sum_{i=1}^n x_i^2\right) \leq \frac{1}{2} e_1(x)^2 = \frac{1}{2}\left(\sum_{i=1}^k \lambda_i a_i\right)^2.\]
So the optimization problem
\[ \max_{z\in \H_L(a)} z_2 \]
has a non-empty set of maximizers $M$. Suppose $\H_L(a)$ has no local extreme point. Then $M$ contains a line, i.e., there is a maximizer $m=(m_1,\dots,m_n)\in M$ and a $y=(y_1,\dots,y_n)\in \R^n$ non-zero such that $y_1=y_2=0$ and $m+\xi y\in \H$ for all $\xi\in \R$. This means $f:=T^n-m_1T^{n-1}+\cdots+(-1)^n m_n$ and $g:=-y_3 T^{n-3}+\cdots+(-1)^n y_n$ are interlacing by \ref{dedieu}, which is not possible because of degree reasons.
\end{proof}
We can use the existence of an extreme point, for example, to obtain the following result which connects to polynomial interpolation.

\begin{corollary}
Consider the set of polynomials of degree $n$, which are monic, have the second coefficient fixed, and solve a $k$-points interpolation problem. Then there exists a hyperbolic polynomial in this set if and only if there exists one with at most $k$ distinct roots. 
\end{corollary}
\begin{proof}
Under the conditions, the corresponding hyperbolic slice has at least one extreme point by Lemma \ref{lemmalocalextremepoint}.
\end{proof}
By prescribing not only the first but also the second-highest coefficient of a monic polynomial, one directly obtains a sufficient condition for the compactness of a hyperbolic slice.

\begin{lemma}\label{compact}
If $Z_1,Z_2\in \Span(l_1,\dots,l_k)$, then $\H_L(a)$ is compact.
\end{lemma}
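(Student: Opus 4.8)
The plan is to show that $\H_L(a)$ is closed and bounded. Closedness is immediate: $\H$ is closed by Remark~\ref{rem:roots} and $L^{-1}(a)$ is an affine subspace, hence closed, so the intersection $\H_L(a)$ is closed.

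For boundedness, the crucial point is that the hypothesis $Z_1,Z_2\in\Span(l_1,\dots,l_k)$ forces the first two coefficients to be constant along the slice. Writing $Z_1=\sum_{i=1}^k\mu_i l_i$ and $Z_2=\sum_{i=1}^k\nu_i l_i$, every $z\in L^{-1}(a)$ satisfies $z_1=\sum_{i=1}^k\mu_i a_i=:c_1$ and $z_2=\sum_{i=1}^k\nu_i a_i=:c_2$. If $\H_L(a)=\emptyset$ there is nothing to prove, so assume $z\in\H_L(a)$ and let $x=(x_1,\dots,x_n)\in\R^n$ denote the real roots of $T^n-z_1T^{n-1}+\cdots+(-1)^nz_n$. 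Exactly as in the proof of Lemma~\ref{lemmalocalextremepoint} one has $\sum_{i=1}^n x_i^2=e_1(x)^2-2e_2(x)=c_1^2-2c_2$, a constant independent of $z$. In particular $c_1^2-2c_2\geq 0$ and $|x_i|\leq\sqrt{c_1^2-2c_2}$ for every $i$, so all roots of all polynomials in $\H_L(a)$ lie in the fixed compact ball $B:=\{x\in\R^n \mid \|x\|\leq\sqrt{c_1^2-2c_2}\}$.

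It remains to transfer this bound from the roots to the coefficients. By Remark~\ref{rem:roots} the Vieta map restricts to a homeomorphism $\Gamma\colon\W\to\H$, and $\H_L(a)=\Gamma\bigl(\Gamma^{-1}(\H_L(a))\bigr)$. The set $\Gamma^{-1}(\H_L(a))$ consists of tuples $x_1\leq\cdots\leq x_n$ with $(e_1(x),\dots,e_n(x))\in L^{-1}(a)$; by the previous paragraph it is contained in $B\cap\W$, and it is closed by continuity of $\Gamma$ together with closedness of $\H_L(a)$. Hence $\Gamma^{-1}(\H_L(a))$ is a closed subset of the compact set $B\cap\W$, so it is compact, and therefore its continuous image $\H_L(a)$ is compact as well.

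Since the argument is just a chain of elementary observations I do not anticipate a genuine obstacle; the only points requiring a little care are the trivial treatment of the empty case and the remark that $c_1^2-2c_2$ is automatically nonnegative whenever the slice is nonempty, which is what makes $B$ well defined. One could also bypass the homeomorphism and argue directly: each coefficient $z_j=e_j(x)$ is a fixed polynomial, hence a continuous function of the roots, so its restriction to the compact ball $B$ is bounded, yielding a uniform bound $|z_j|\leq{n\choose j}(c_1^2-2c_2)^{j/2}$ on $\H_L(a)$.
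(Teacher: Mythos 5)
Your proof is correct and follows essentially the same route as the paper: both reduce boundedness to the identity $\sum_i x_i^2=e_1(x)^2-2e_2(x)$, which is constant on the slice since $Z_1,Z_2\in\Span(l_1,\dots,l_k)$, and then conclude with closedness of $\H_L(a)$. You are in fact slightly more explicit than the paper in transferring the bound from the roots to the coefficients (via the Vieta map or the estimate $|e_j(x)|\leq\binom{n}{j}(c_1^2-2c_2)^{j/2}$), a step the paper leaves implicit.
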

\begin{proof}
As the empty set is compact we can assume that there is $z \in \H_L(a)$. Furthermore we write $Z_1=\sum_{i=1}^k \lambda_i l_i$ and $Z_2=\sum_{i=1}^k \chi_i l_i$ for some $\lambda_1,\dots,\lambda_k,\chi_1,\dots,\chi_k \in \R$ and denote by $x=(x_1,\dots,x_n)\in \R^n$ the roots of 
\[f_z := T^n - z_1 T^{n-1} + \cdots +(-1)^n z_n.\] 
Then $e_1(x) = z_1 =\sum_{i=1}^k \lambda_i l_i(x)=\sum_{i=1}^k \lambda_i a_i$ and $e_2(x) = \sum_{i=1}^k \chi_i a_i$ and hence
\[\sum_{i=1}^n x_i^2 = e_1(x)^2 - 2e_2(x) = \left(\sum_{i=1}^k \lambda_i a_i\right)^2 - \sum_{i=1}^k \chi_i a_i.\]
This shows that $x$ is contained in a ball, thus $\H_L(a)$ is bounded. Furthermore, as the roots of a polynomial depend continuously on the coefficients it is clear that $H_L(a)$ is closed and therefore compact (see Remark \ref{rem:2}).
\end{proof}
We close this section with a selection of examples of two-dimensional hyperbolic slices which highlight the various mentioned scenarios.

\begin{example}\label{example3}
Consider $\H_L(a_2,a_4)$, where $a:=(a_2,a_4)\in \R^2$ such that $a_2< 0$ and $a_4> 0$ and
\[\abb{L}{\R^4}{\R^2}{(z_1,z_2,z_3,z_4)}{(z_2,z_4)}.\]
Then, there are the following three possible situations.
\paragraph{\bf a:}
 If $a:=(a_2,a_4)$ satisfy $a_2^2-4a_4< 0$, the hyperbolic slice $\H_L(a)$ will contain two local extreme points. In particular, $\H_L^2(a)\neq \emptyset$. Furthermore, the local extreme points of $\H_L(a)$ are not global extreme points. Therefore, they are not extreme points of the convex hull of $\H_L(a)$. This is illustrated in Figure \ref{fig:a}.
\paragraph{\bf b:} For all values $a:=(a_2,a_4)$ with $a_2^2-4a_4= 0$, $\H_L(a)$ will contain no local extreme points. But the $2$-boundary of $\H_L(a)$ is non-empty. Indeed,
%\vspace{-2mm}
\[T^4+a_2T^2+a_4=\left(T-\sqrt{\frac{-a_2}{2}}\right)^2\left(T+\sqrt{\frac{-a_2}{2}}\right)^2,\]
%\vspace{-0mm}
and thus $(0,a_2,0,a_4)\in \H_L^2(a)$. This situation is illustrated in Figure \ref{fig:b}.

\paragraph{\bf c:} For the values $a:=(a_2,a_4)$ with $a_2^2-4a_4> 0$, $\H_L(a)$ will contain no local extreme point. Moreover, $\H_L^2(a)$ is empty in this case, while $\H_L(a)\neq \emptyset$. This is illustrated in Figure \ref{fig:c}.

\noindent Indeed, the polynomial $f=T^4+a_2T^2+a_4$ is hyperbolic with the $4$ distinct roots \[x_{1,2,3,4}:=\pm \sqrt{\frac{-a_2 \pm \sqrt{a_2^2-4a_4}}{2}}.\] Therefore, the hyperbolic slice $\H_L(a)$ is non-empty. 
On the other hand, suppose that the $2$- boundary $\H_L^2(a)$ is non-empty, i.e., that we can find $(a_1,a_2,a_3,a_4)\in \H_L^2(a)$. This in turn implies that there are $x,y\in \R$ such that the polynomial \[f_a:=T^4-a_1T^3+a_2T^2-a_3T+a_4\] factors either as \[f_a=(T-x)^3(T-y) \text{ or } f_a=(T-x)^2(T-y)^2.\] 

\noindent In the first case a comparison of coefficients shows $a_2=3xy+3x^2$ and $a_4x^3y$. Since $a_4>0$ we must have $x,y\neq 0$ and can solve $y=\frac{a_4}{x^3}$. This implies $a_2=\frac{3a_4}{x^2}+3x^2$ and $3x^4-a_2x^2+3a_4=0$. However, since $x\neq 0$, $a_2< 0$ and $a_4> 0$ we must have $3x^4-a_2x^2+3a_4>0$, and thus have a contradiction. Analogously, for the second case, comparing coefficients shows $a_2=4xy+x^2+y^2$ and $a_4=x^2y^2$. We solve for $y$ and get $y=\pm \frac{\sqrt{a_4}}{x}$ from which we find $a_2=\frac{a_4}{x^2}+x^2\pm 4\sqrt{a_4}$. But since $a_2< 0$, $a_4> 0$ and $a_2^2-4a_4> 0$ the resulting polynomial equation $x^4+(\pm 4\sqrt{a_4}-a_2)x^2+a_4=0$ clearly has no real solution. 

\begin{figure}[ht!]
 \centering
 \begin{subfigure}[b]{0.32\textwidth}
   \centering
   \includegraphics[width=\textwidth]{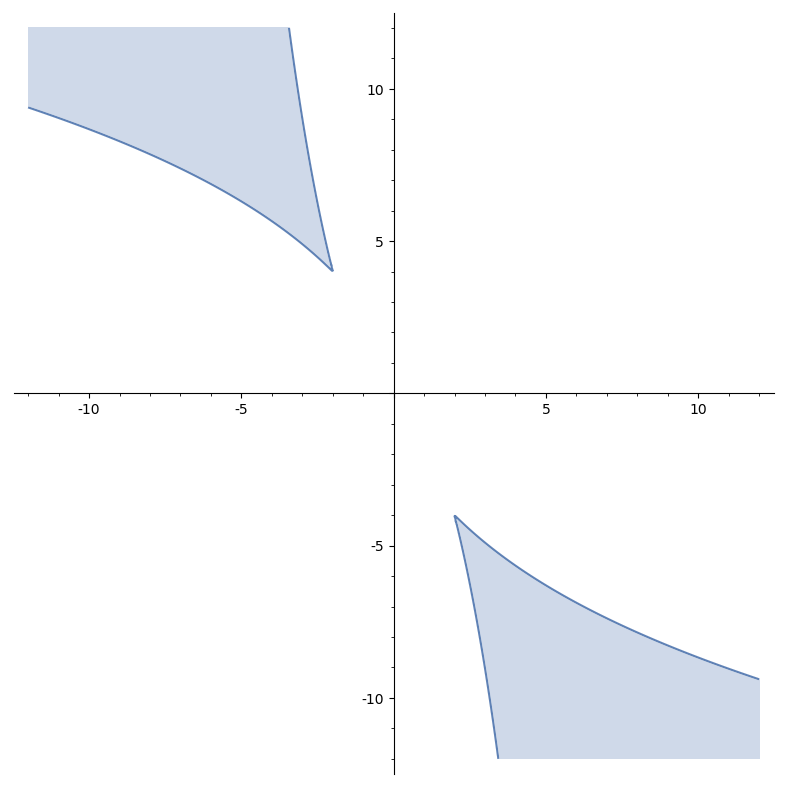}
   \caption{$\H_L(-3,4)$ contains the local extreme points $(\pm 2,-3,\mp 4,4)$.}\label{fig:a}
   %\label{fig:y equals x}
 \end{subfigure}
  \hfill
 \begin{subfigure}[b]{0.32\textwidth}
   \centering
   \includegraphics[width=\textwidth]{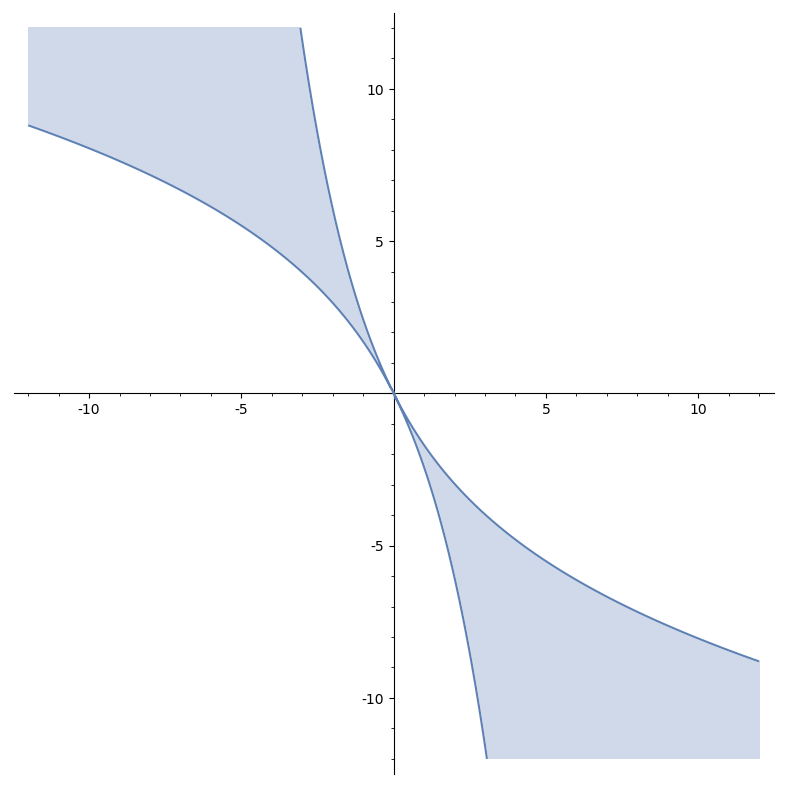}
   \caption{$\H_L(-4,4)$ has no local extreme point and $(0,-4,0,4)\in \H^2$.}\label{fig:b}
   %\label{fig:y equals x}
 \end{subfigure}
 \hfill
 \begin{subfigure}[b]{0.32\textwidth}
   \centering
   \includegraphics[width=\textwidth]{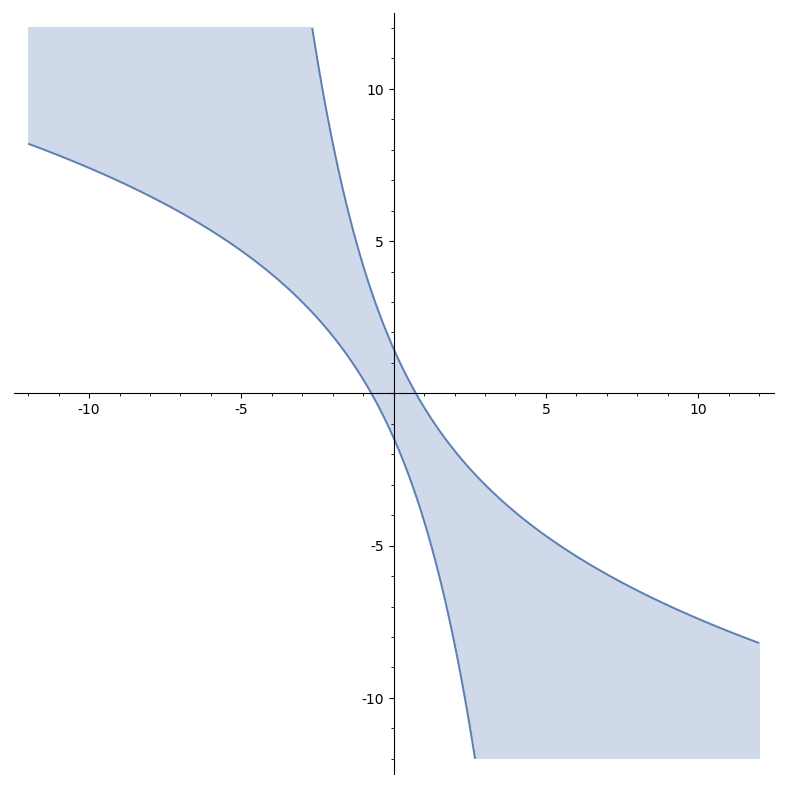}
   \caption{$\H_L(-5,4)$ has no local extreme point and $\H_L^2(-5,4)=\emptyset$.}\label{fig:c}
   %\label{fig:y equals x}
 \end{subfigure}
 %\caption{Caption}
 %\label{}
\end{figure}
\end{example}

\section{Positivity of symmetric polynomial functions}\label{sec:positivity}
In this section we will study real polynomial functions defined by symmetric polynomials. Since every symmetric polynomial can be written in a unique way as a polynomial in elementary symmetric polynomials, we can use the geometric description of hyperbolic slices obtained before to characterize the minimal points of a large class of symmetric polynomial functions which are sparse in an appropriate sense (see Definition \ref{def:sprase}). It had already been observed by various authors that certain symmetric functions attain their minimal values on symmetric points (e.g. \cite{keilson1967global,foregger1987relative,kovavcec2012note}). Other authors found that symmetric polynomial functions of a bounded small enough degree attain their minima on points with few distinct coordinates (e.g. \cite{timofte2003positivity,riener2012degree}). We generalize these results by considering symmetric polynomial functions which are completely characterized through their values on points with at most $k$ distinct coordinates.

\subsection{The notions of $k$-completeness and $k$-testability}
\begin{definition}\label{def:kcomplete}
For $k\in \N$ we consider the set \[\A_k:=\left\{x\in \R^n \,:\, |\{x_1,\dots,x_n\}| \leq k \right\}\] of points with at most $k$ different coordinates. Given a symmetric polynomials $f\in \R[\X]$ and $S\subseteq \R^n$ we say that $f$ is 
\begin{enumerate}
 \item $k$-\emph{complete} on $S$ if 
 \[f(S)=f(S\cap \A_k).\]
 \item $k$-\emph{testable} on $S$ if 
 \[\inf_{x\in S} f(x)=\inf_{x\in S\cap \A_k} f(x).\]
\end{enumerate}
In case $S=\R^n$ we may omit it and just speak of $k$-testable and $k$-complete polynomials. 
\end{definition}
The two notions of $k$-complete and $k$-testable are very closely connected, but the first one is stronger, while the second one might be interesting in particular in the context of optimization. 
In order to motivate the study of this class, we exemplify first how algorithmic problems can be substantially simplified for $k$-complete and $k$-testable symmetric polynomials.

\begin{definition}
A decreasing sequence of positive integers $\lambda=(\lambda_1,\ldots,\lambda_k)$ which sums up to $n$ is called a \emph{partition of $n$ into $k$ parts}. We will write $\lambda\vdash_k n$ to denote that $\lambda$ is a partition of $n$ into $k$ parts. Let $f\in\R[\X]$ be a symmetric polynomial. Then for $\lambda\vdash_k n$ we define
\[f^\lambda:=f(\underbrace{X_1,\dots,X_1}_{\lambda_1-\text{times}},\dots,\underbrace{X_k,\dots,X_k}_{\lambda_k-\text{times}})\in\R[X_1,\dots,X_k].\]
\end{definition}
Note that the number of partitions of $n$ into $k$ parts is at most $\binom{n+k}{k}$ and thus polynomial in $n$ for a fixed $k$. Therefore the above notion allows reducing, for example, the question of whether a symmetric polynomial in $n$ variables is non-negative to a polynomial number of such queries in $k$ variables. It is, for example, known to be NP-hard to decide the non-negativity of a given polynomial of degree 4 (see e.g. \cite{blum1998complexity} or \cite{murty1985some}). Clearly, by applying the above procedure, one can obtain algorithmic simplifications which yield polynomial complexity for this kind of problem (see also \cite{faugere2023computing} where this method is applied also for other algorithmic questions). We highlight in particular the following version of Artin's solution to Hilbert's 17th problem for $k$-complete symmetric polynomials, which is a direct consequence of the sketched procedure of identifying variables. 

\begin{proposition}[Hilbert's 17th problem for $k$-complete polynomials]
Let $f\in\R[\X]$ be a symmetric $k$-testable polynomial. Then $f$ attains only non-negative values on $\R^n$ if and only if for all $\lambda\vdash_k n$ we can find a sum of squares of polynomials $t\in\sum\R[X_1,\dots,X_k]^2$ such that $t\cdot f^\lambda$ is also a sum of squares of polynomials. 
\end{proposition}

The main interest in the statements presented above is that the reduction of dimension also gives new complexity bounds for the degrees of the polynomials in question. For example, for Hilbert's 17th problem for $k$-complete polynomials we can adapt the currently known complexity bounds.
\begin{remark}
Let $f$ be a $n$-variate $k$-complete polynomial of degree $d$. Then $f$ is non-negative if and only if we can write each $f^\lambda$ as a sum of at most $2^k$ rational squares by \cite{pfister1967darstellung}. We can also write each $f^\lambda$ as a sum of squares of rational functions, where, following \cite{lombardi2014elementary}, we obtain the following degree bounds for the numerators and denominators:
\[2^{2^{2^{d^{4^{k}}}}}.\]
\end{remark}

\subsection{Sufficient and quasi-sufficient polynomials}

Now, we want to show that it is possible to produce a large class of $k$-complete symmetric polynomials based on the results on hyperbolic polynomials. Throughout this section we fix $1\leq k\leq n$ and consider the $k$ linearly independent linear forms $l_1,\dots,l_k \in \R[Z_1,\dots,Z_n]_1$ and the linear map $L:~ \R^n\rightarrow \R^k, ~ z \mapsto (l_1(z),\dots,l_k(z))$. Recall that a symmetric polynomial $f\in \R[\X]$ can be written uniquely in terms of the elementary symmetric polynomials, say $f=g(e_1,\dots,e_n)$. Now evaluation of $f$ in a point $x\in \R^n$ translates into evaluation of $g$ in a point $z\in \H$ and evaluation on $\A_k$ translates into evaluation of $g$ on $\H^k$. By partitioning 
\[\H=\bigcup_{a\in \R^k} \H_L(a) ~ \text{and} ~ \H^k=\bigcup_{a\in \R^k} \H^k_L(a)\]
for the map $L$, we can use our previous results to show under some mild conditions that $f$ is $k$-complete or $K$-testable if it allows for a special representation in terms of $k$ linear forms of elementary symmetric polynomials. We define these representations in the following. 
\begin{definition}\label{def:sprase}
Let $f \in \R[\X]$ be a symmetric polynomial and write $f$ in terms of elementary symmetric polynomials, say $f=g(e_1,\dots,e_n)$ for some $g\in \R[Z_1,\dots,Z_n]$.
\begin{enumerate}
\item We say that $f$ is \emph{$(l_1,\dots,l_k)$-sufficient} if $g\in\R[l_1,\dots,l_k].$
\item We say that $f$ is \emph{$(l_1,\dots,l_k)$-quasi-sufficient} if $f$ admits a representation of the form \[f=f_0+f_1e_1+\dots+f_ne_n\] for some $(l_1,\dots,l_k)$-sufficient polynomials $f_0,\dots,f_n$.
\item Furthermore, we say that $f$ is \emph{$(l_1,\dots,l_k)$-concave-sufficient} if $g$ is concave on $H_L(a)$ for all $a\in \R^k$.
\end{enumerate}
Moreover, we say that a symmetric semi-algebraic set $S\subseteq \R^n$ is \emph{$(l_1,\dots,l_k)$-sufficient}, if it can be described by $(l_1,\dots,l_k)$-sufficient polynomials.
\end{definition}
The following proposition is a direct consequence of the unique representation of a symmetric polynomial of degree $d$ in terms of the elementary symmetric polynomials and may serve as a motivation for the definitions given above. 
\begin{proposition}\label{prop:(half-)degree}
Let $f \in \R[\X]$ be symmetric of degree $d$. Then $f$ is $\left( Z_1,\dots,Z_{d} \right)$-sufficient and $\left( Z_1,\dots,Z_{\floor*{\frac{d}{2}}} \right)$-quasi-sufficient. %and $f$ is $(1,\dots,d)$-sufficient.
\end{proposition}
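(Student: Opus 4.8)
The goal is to show that any symmetric polynomial $f$ of degree $d$ can be written as $g(e_1,\dots,e_n)$ with $g \in \R[Z_1,\dots,Z_{\floor*{d/2}}]$ after peeling off a linear part in the remaining $e_i$. The plan is to exploit the classical fact that, under the weight grading on $\R[Z_1,\dots,Z_n]$ that assigns $\wt(Z_i) = i$, the isomorphism $f \mapsto g$ sending symmetric polynomials to polynomials in the $e_i$ is weight-preserving: a symmetric polynomial of degree $d$ corresponds to a $g$ of weighted degree $\leq d$. So first I would recall (or cite Remark/earlier discussion on Vieta's map) that $\deg f = d$ forces every monomial $Z_1^{a_1}\cdots Z_n^{a_n}$ appearing in $g$ to satisfy $\sum_i i\, a_i \leq d$.

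Next, the combinatorial heart: in such a monomial, how many of the variables $Z_{\floor*{d/2}+1},\dots,Z_n$ can appear, and to what power? If $Z_j$ with $j > \floor*{d/2}$ appears, then $j \geq \floor*{d/2}+1 > d/2$, so $j\,a_j \leq d$ forces $a_j = 1$ (since $a_j \geq 2$ would give $j a_j > d$), and moreover at most one such variable can appear, because if $Z_j$ and $Z_{j'}$ both appeared with $j, j' > d/2$ then $j + j' > d$, contradicting $\sum i a_i \leq d$. Hence every monomial of $g$ either involves only $Z_1,\dots,Z_{\floor*{d/2}}$, or is of the form $Z_j \cdot (\text{monomial in } Z_1,\dots,Z_{\floor*{d/2}})$ for a single index $j$ with $\floor*{d/2} < j \leq n$. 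Collecting terms, this gives exactly a representation $g = g_0 + \sum_{j} g_j Z_j$ where $g_0$ and each $g_j$ lie in $\R[Z_1,\dots,Z_{\floor*{d/2}}]$; translating back through $Z_i \leftrightarrow e_i$ yields $f = f_0 + f_1 e_1 + \dots + f_n e_n$ with each $f_i$ being $(Z_1,\dots,Z_{\floor*{d/2}})$-sufficient (note the low-index terms $e_1,\dots,e_{\floor*{d/2}}$ can simply be absorbed into $f_0$, or kept; either way the definition of quasi-sufficiency in Definition \ref{def:sprase} is met). This is precisely the assertion that $f$ is $(Z_1,\dots,Z_{\floor*{d/2}})$-quasi-sufficient.

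The step I expect to require the most care is the weight-grading fact itself — namely that $f \mapsto g$ preserves the grading where $\wt(e_i) = i$ matches the standard degree. This is standard (it follows because $e_i$ is homogeneous of degree $i$, so a weighted-homogeneous $g$ of weight $w$ pulls back to a homogeneous symmetric polynomial of degree $w$, and the fundamental theorem of symmetric polynomials gives a bijection degree-by-degree), but it should be stated cleanly, perhaps with a one-line justification or a reference, since the entire argument rests on it. Everything after that is the elementary counting above. One small point to handle: the claim should be read with $d \geq 2$ so that $\floor*{d/2} \geq 1$ and the linear forms $Z_1,\dots,Z_{\floor*{d/2}}$ form a nonempty independent set as required by the running hypotheses of the section; for $d \leq 1$ the statement is trivial (constants and linear symmetric polynomials are polynomials in $e_1$ alone), so I would just note this or implicitly assume $d\geq 2$ in line with the paper's convention $1 \leq k \leq n$.
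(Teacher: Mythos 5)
Your argument is correct and is exactly the reasoning the paper intends: the paper's proof is a one-line appeal to the unique representation of a degree-$d$ symmetric polynomial in the elementary symmetric polynomials, and your weighted-degree bookkeeping (each monomial of $g$ satisfies $\sum_i i\,a_i \leq d$, so at most one variable $Z_j$ with $j > \floor*{d/2}$ appears, and only linearly) is precisely the omitted detail behind that assertion. No gap; your write-up just makes explicit what the paper leaves implicit.
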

\begin{remark}\label{rem:implications}
The notions defined above are increasingly strict in the following sense: Sufficiency $(1)$ implies quasi-sufficiency $(2)$, which in turn implies concave-sufficiency $(3)$ of both $f$ and $-f$.
\end{remark}
The results on hyperbolic slices now translate to the following statements on symmetric real polynomial functions.

\begin{theorem}\label{thm:4}
Let $S\subseteq \R^n$ be a symmetric $(l_1,\dots,l_k)$-sufficient semi-algebraic set and let $f\in\R[\X]$ be a symmetric polynomial.
\begin{enumerate}
 \item If $f$ is $(l_1,\dots,l_k)$-sufficient and if every non-empty hyperbolic slice $\H_L(a)$ contains a local extreme point, then $f$ is $k$-complete on $S$.
 \item If $f$ is $(l_1,\dots,l_k)$-concave-sufficient and $\H_L(a)$ is compact for all $a \in \R^k$, then $f$ is $k$-testable on $S$.
 \item If $f$ is $(l_1,\dots,l_k)$-quasi-sufficient and $\H_L(a)$ is compact for all $a \in \R^k$ and $S\cap \A_k$ is connected, then $f$ is $k$-complete on $S$.
 \item If $f$ is $(l_1,\dots,l_k)$-concave-sufficient and not $(l_1,\dots,l_k)$-sufficient and \[\inf_{x\in S} f(x)>-\infty,\]
 then $f$ is $k$-testable on $S$.
\end{enumerate}
\end{theorem}
\begin{proof}
$(1)$: Let $g\in\R[Z_1,\dots,Z_n]$ such that $f = g(e_1,\dots, e_n)$. Let $x\in S$ and consider $z:=\Gamma(x)$ and $a:=L(z)$. There is $\tilde z \in \H_L^k(a)$ by Theorem \ref{maintheorem} since $\H_L(a)$ admits a local extreme point. So there is $\tilde x \in \A_k$ with $\Gamma (\tilde x)=\tilde z$. Then $f(x)=f(\tilde x)$ and $\tilde x \in S$ since $f$ and $S$ are $(l_1,\dots,l_k)$-sufficient.

$(2)$: Let $g\in\R[Z_1,\dots,Z_n]$ such that $f = g(e_1,\dots, e_n)$. Let $x\in S$ and consider $z:=\Gamma(x)$ and $a:=L(z)$. Since $g$ is concave on $L^{-1}(a)$ by the concave-sufficiency of $f$ and $\H_L(a)$ is compact we can apply Corollary \ref{corollarycompact} and get that
\[\min_{y\in \H_L(a)} g(y)=\min_{y\in \H^k_L(a)} g(y),\]
i.e., there is $\tilde z \in \H^k_L(a)$ with $g(\tilde z)\leq g(z)$. Let $\tilde x \in \A_k$ with $\Gamma (\tilde x)=\tilde z$. Then $f(\tilde x)\leq f(x)$ and $\tilde x \in S$ since $S$ is $(l_1,\dots,l_k)$-sufficient and we can conclude that $f$ is $k$-testable on $S$.

$(3)$: Let $x_0\in S$. We can apply $(2)$ since $f$ and $-f$ are both $(l_1,\dots,l_k)$-concave-sufficient by Remark \ref{rem:implications} and get that
\[\inf_{x\in S} f(x) = \inf_{x\in S\cap\A_k} f(x) \quad \text{and} \quad \sup_{x\in S} f(x) = \sup_{x\in S\cap\A_k} f(x),\]
so there are $x_1,x_2\in S\cap\A_k$ with $f(x_1)\leq f(x_0)$ and $f(x_2)\geq f(x_0)$. Since $S\cap \A_k$ is connected there is $\tilde x \in S\cap \A_k$ with $f(\tilde x)=f(x_0)$ by the intermediate value theorem.

$(4)$: 
Let $g\in\R[Z_1,\dots,Z_n]$ such that $f = g(e_1,\dots, e_n)$. There is $x_0\in S$ with
\[ \inf_{x\in S} f(x)=f(x_0) \]
consider $z_0:=\Gamma(x_0)$ and $a:=L(z)$. Since $g$ is concave and not constant on $\H_L(a)$, $g$ attains its minimum on an extreme point of $\H_L(a)$, i.e., we can assume that $z_0\in \H_L^k(a)$ and therefore $x_0\in \A_k$.
\end{proof}

The existence of local extreme points in Theorem \ref{thm:4} (1) is indeed necessary, as in cases without local extreme points it is possible to construct situations where the statement will not hold. We showcase this in the following.
\begin{example}\label{example4}
Let $K(h)=\R^4$, $l_1:=Z_2$, $l_2:=Z_4$ and $L: ~\R^4\to \R^2, ~ z\mapsto (l_1(z),l_2(z))$ and consider the $(l_1,l_2)$-sufficient symmetric polynomial
\[f= (e_2+5)^2+(e_4-4)^2 \in \R[X_1,X_2,X_3,X_4].\]
The $2$-boundary $\H^2_L(-5,4)$ is empty by Example \ref{example3} (3). So $f(x)> 0$ for all $x\in \A_2$, but $f(1,-1,2,-2)=0$.
%So let $a:=\min_{x\in \A_2} f(x) >0$. Consider now $g:=f-a$. Since $g(1,-1,2,-2)=-a<0$, $g$ is not positive. But $g(x)\geq \min_{x\in \A_2} g(x)=a-a=0$ for all $x\in \A_2$.
\end{example}

One can in fact prove that the polynomial $f$ in Example \ref{example4} is still $3$-complete. Indeed, the necessity of the existence of an extreme point in every hyperbolic slice seems to restrict the applications of Theorem \ref{thm:4}. However, by applying  Lemma \ref{lemmalocalextremepoint} and Lemma \ref{compact} we can obtain the following version of Theorem \ref{thm:4} which avoids this issue at the price of a slightly weaker conclusion.

\begin{corollary}
\label{cor:thm:4}
Let $S\subseteq \R^n$ be a symmetric $(l_1,\dots,l_k)$-sufficient semi-algebraic set and let $f\in\R[\X]$ be a symmetric polynomial.
\begin{enumerate}
 \item If $f$ is $(l_1,\dots,l_k)$-sufficient, then $f$ is $(k+1)$-complete on $S$.
 \item If $f$ is $(l_1,\dots,l_k)$-concave-sufficient, then $f$ is $(k+2)$-testable on $S$.
 \item If $f\in\R[\X]$ is $(l_1,\dots,l_k)$-quasi-sufficient and $S\cap \A_k$ is connected, then $f$ is $(k+2)$-complete on $S$.
\end{enumerate}
 Moreover if $Z_1\in \Span(l_1,\dots,l_k)$, then $(k+1)$ in $(1)$ can be replaced by $k$-complete. If $Z_1,Z_2\in \Span(l_1,\dots,l_k)$, then $(k+2)$ in $(2)$ and $(3)$ can be replaced by $k$.
\end{corollary}

The results in this section were given entirely for symmetric functions. To conclude this section we remark the following direct translation of the results to even symmetric polynomials or equivalently copositive symmetric polynomials.
\begin{remark}\label{evensymmetric}
The results on symmetric polynomials translate directly to even symmetric polynomials, i.e., polynomials invariant by the natural action of the Hyperoctahedral group $S_2\wreath S_n$. Denote by
\begin{align*}
\E :&= \{z\in \R^n ~|~ T^{2n}-z_1T^{2(n-1)}+\dots+(-1)^n z_n \text{ is hyperbolic}\}\\
&=\{z\in \H ~|~ T^{n}-z_1T^{n-1}+\dots+(-1)^n z_n \text{ has only non-negative roots} \}\end{align*}
the set of even hyperbolic polynomials. Furthermore, we define
\[\E^k:=\{z\in \E ~|~ T^{n}-z_1T^{n-1}+\dots+(-1)^n z_n \text{ has at most $k$ positive roots} \} \]
and $\E_L(a):=\E \cap L^{-1}(a)$ and $\E_L^k(a)$ accordingly. Then the proof of Theorem \ref{maintheorem}
translates to $\locextr(\E_L(a))\subseteq \E^k_L(a)$ and both sets are generically finite. By replacing $\A_k$ by \[\B_k:=\{ x\in\R^n ~:~ |\{x_1^2,\dots,x_n^2\}\setminus \{0\}|\leq k \}\] we can transfer the statements of Theorem \ref{thm:4} and Corollary \ref{cor:thm:4} about $k$-completeness and $k$-testability of (quasi-)sufficient symmetric polynomials to (quasi-)sufficient even symmetric polynomials $f$, i.e., polynomials that admit a representation of the form \[f=g(e_1(X_1^2,\dots,X_n^2),\dots,e_n(X_1^2,\dots,X_n^2))\] with $g\in \R[l_1,\dots,l_k]$. Note that in this case it suffices already to fix the first coefficient in order to obtain compactness, so one can replace $(k+2)$ in Corollary \ref{cor:thm:4} $(2)$ and $(3)$ by $(k+1)$. 
\end{remark}

\subsection{Deciding sufficiency}
Generally the definition of sufficient and quasi-sufficient given above can appear to be not  directly verifiable. Especially since mostly one is given a symmetric polynomial without its representation in terms of linear combinations of elementary symmetric polynomials. Therefore, we want to shortly present how to algorithmically approach the question if a given symmetric polynomial is sufficient or quasi-sufficient. In order to decide if a symmetric polynomial $f \in \R[\X]$ is sufficient for some collection of linear forms $l_1,\ldots,l_k$ one has principle two task:

\begin{enumerate}
 \item Finding a representation of $f=g(e_1,\ldots,e_n)$ in terms of elementary symmetric polynomials: This can be achieved, for example, by using the Gröbner basis 
 $G:=\{g_1,\dots,g_k\}$, where 
 \[g_k=\sum_{\substack{\alpha\in \N_0^{n-k+1}\\|\alpha|=k}}X_k^{\alpha_1}\cdots X_n^{\alpha_{n-k+1}} %h_k(X_k,\cdots, X_n)
 +\sum_{i=1}^k (-1)^iY_i 
 %h_{k-i}(X_k,\dots,X_n)
 \sum_{\substack{\alpha\in \N_0^{n-k+1}\\|\alpha|=k-i}}X_k^{\alpha_1}\cdots X_n^{\alpha_{n-k+1}}\]
 of the ideal $I=(e_1-Y_1,\dots,e_n-Y_n) \subseteq \R[\X,Y_1,\dots,Y_n]$ which is independent from $f$ and then by computing the remainder $g$ of $f$ on division by $G$. One obtains now $f=g(e_1,\dots,e_n)$ (see Proposition 4 and Proposition 5 in §1 of Chapter 7 in \cite{cox2013ideals} for details).
 Alternatively one can use the algorithm presented in \cite{vu2022complexity}. 
 \item Once $g\in\R[e_1,\ldots,e_n]$ is obtained, one has to decide if there exist $k<n$ linear combinations $l_1,\ldots,l_k$ of the $e_1,\ldots,e_n$ such that $g\in \R[l_1,\dots,l_k]$. Also this can be accomplished quite concretely, for example, by using the approach outlined by Carlini \cite{carlini2006reducing}. As described there, the smallest number $k$ of linear forms $l_1,\dots,l_k$ needed such that $g\in \R[l_1,\dots,l_k]$ is obtained by computing the rank of the Catalectican matrix of $g$. This matrix is obtained by the coefficients of the partial derivatives of $g$. More concretely, one can actually also explicitly construct these linear forms by computing a basis for the vector space of the $(d-1)$-th partial derivatives of $g$. 
\end{enumerate}

\noindent The steps described above rely mostly on linear algebra and can be efficiently implemented also for larger numbers of variables.

\begin{remark}
In the special case when one wants to decide if a symmetric polynomials $f$ is $e_{i_1},\ldots,e_{i_m}$-quasi-sufficient (where $1\leq i_1 \leq \dots \leq i_n \leq k$) one can actually proceed with the following examination of the gradient of $f$ without going through the steps above:
 As a symmetric polynomial $f$ cane be written as $f=g(e_1,\dots,e_n)$ we have 
 \[\nabla f=\nabla g J_{e_1,\dots,e_n}.\]
 Noting that $J_{e_1,\dots,e_n}$ is invertible over $\R(X_1,\ldots,X_n)$ we get 
 \[\nabla f J_{e_1,\dots,e_n}^{-1} =\nabla g.\]
 Now, if for $I\subseteq \{1,\dots,n\}$ the corresponding entries in $\nabla g$ are constants, then $f$ is $(e_i)_{\{1,\dots,n\} \setminus I}$-quasi-sufficient.
\end{remark}
We give a short example to illustrate the algorithmic approach. 
\begin{example}
We consider the following toy example of a symmetric polynomial in three variables in order to showcase the methods described above
\begin{align*}
f=&\sum_{\sigma \in S_3}\sigma \,\Bigg(\frac{1}{2}\,X_1^3
+X_1^2X_2^2+3X_1^2X_2+X_1^3X_2+X_1X_2X_3-X_1^2X_2^2X_3^2\\
&+\frac{1}{2}X_1^3X_2^3X_3^2-2 X_1^3X_2^2X_3-X_1^3X_2X_3-2X_1^2X_2^2X_3+\frac{5}{2}X_1^2X_2X_3\Bigg),
\end{align*}
where $S_3$ acts on $\R[X_1,X_2,X_3]$ by permutation of variables.

The Gr\"obner basis corresponding to the ideal 
\[I:=\langle e_1-Y_1,e_2-Y_2,e_3-Y_3 \rangle\] is given by 
\[G=\{X_1 + X_2 + X_3 - Y_1, X_2^2 + X_2X_3 - X_2Y_1 + X_3^2 - X_3Y_1 + Y_2, X_3^3 - X_3^2Y_1 + X_3Y_2 - Y_3\}.\]
By computing the remainder of $f$ on division by $G$ one obtains
\[g=Y_1^3 + Y_1^2Y_2 - 2Y_1^2Y_3 - 2Y_1Y_2Y_3 + Y_1Y_3^2 + Y_2Y_3^2\in \R[Y_1,Y_2,Y_3]\]
with $f=g(e_1,e_2,e_3)$. In order to compute the Catalactican of $g$, we fix a monomial basis \[M=\{M_1,\dots,M_6\}=\{Y_1^2,Y_1Y_2,Y_1Y_3,Y_2^2,Y_2Y_3,Y_3^2\}\] 
for the ternary forms of degree $2=\deg(g)-1$. Calculating the partial derivatives 
\[\partial_i g = c_{i1}M_1+\dots+c_{i6}M_6 \] we obtain 
he Catalactican $C_g$ of $g$ defined as $(C_g)_{ij}=c_{ij},$ i.e.
\[C_g=\begin{pmatrix}
 3 & 2 & -4 & 0 & -2 & 1 \\
1 & 0 & -2 & 0 & 0 & 1 \\
-2 & -2 & 2 & 0 & 2 & 0
\end{pmatrix}.\]
The number of linear forms needed to express $g$ is then equal to $\rank(C_g)=2$. In order to find linear forms needed to express $g$, it suffices to compute a basis for the span of the second partial derivatives of $g$, we obtain
\[\{Y_1-Y_3,Y_2+Y_3\}\]
and indeed \[g=(Y_2 + Y_3)(Y_1 - Y_3)^2 + (Y_1 - Y_3)^3,\] i.e.
$f$ is $(Y_2 + Y_3,Y_1 - Y_3)$-sufficient and $(Y_1-Y_3)$-quasi-sufficient.
\end{example}

\section{Applications and examples}\label{sec:ex}
We will now show some applications of the theory developed here and use it on some concrete examples to underline the potential of the results presented. We begin with examining the following polynomial which was given by Robinson \cite{robinson1969some} as an example of a non-negative form which is not a sum of squares. Note that this example could also be  obtained by a variant of the half degree principle to even symmetric polynomials.  
\begin{example}[Robinson Polynomial]
The non-negativity of the Robinson polynomial
\[R=X^6+Y^6+Z^6-\left(X^4Y^2+X^2Y^4+X^4Z^2+X^2Z^4+Y^4Z^2+Y^2Z^4\right)+3X^2Y^2Z^2\]
 can be easily verified using Remark \ref{evensymmetric}. Indeed,
\[R=e_1(X^2,Y^2,Z^2)^3-4e_1(X^2,Y^2,Z^2)e_2(X^2,Y^2,Z^2)+9e_3(X^2,Y^2,Z^2)\]
is a $Z_1$-quasi-sufficient even symmetric polynomial. Therefore, we only need to examine $R$ on the set \[\B_1:=\{ x\in\R^3 \,:\, |\{x_1^2,x_2^2,x_3^2\}\setminus \{0\}|\leq 1 \}.\]
Since we easily find that the two (dehomogenized) univariate polynomials
\begin{align*}
 R_{1}&=R(1,T,T)=T^4 - 2T^2 + 1=(T - 1)^2(T + 1)^2\\
 R_2&=R(1,T,0)=T^6 - T^4 - T^2 + 1=(T^2 + 1)(T - 1)^2(T + 1)^2.
\end{align*}
are non-negative, $R$ is indeed non-negative. 
Moreover, we directly also see that $R$ has at least the $10$ projective zeros 
\[ (1,\pm 1,\pm 1),(0,\pm 1, \pm 1), (\pm 1,0, \pm 1), (\pm 1, \pm 1,0)\] which constitute the orbits of $(1,1,1)$ and $(1,1,0)$. One easily checks that these zeros are isolated. From this observation one immediately also obtains that $R$ cannot be a sum of squares. Indeed, since a zero of a sum of squares also has to be a zero of every summand, a sextic which is a sum of squares can have at most $9$ isolated zeros.
\end{example}

Furthermore, we will show how our results can be used to verify symmetric inequalities rather easily. 
\begin{example}[AM–GM inequality]
The inequality of arithmetic and geometric means is a standard inequality from analysis, stating that for all $x\in\R_{\geq 0}^n$ we have
\[\frac{x_1 + x_2 + \cdots + x_n}{n} \ge \sqrt[n]{x_1 \cdot x_2 \cdots x_n},\]
or equivalently 
\[e_1^n-n^n e_n \geq 0 ~ \text{on} ~ \R_{\geq 0}^n.\]
By squaring the variables this is equivalent to 
\[F=e_1(X_1^2,\dots,X_n^2)^n-n^n e_n(X_1^2,\dots,X_n^2)\]
is non-negative, which can be proven by applying again Remark \ref{evensymmetric} similarly to the previous example.
\end{example}

\begin{example}[Maclaurin's inequality]
More general we have
\[\sqrt[i]{\frac{e_i(x)}{\binom{n}{i}}}\geq \sqrt[j]{\frac{e_j(x)}{\binom{n}{j}}}\]
for all $x\in \R_{\geq 0}^n$ and $i\leq j$ which is equivalent to
\[F=\binom{n}{j}^{2i} e_i(X_1^2,\dots,X_n^2)^{2j} - \binom{n}{i}^{2j} e_j(X_1^2,\dots,X_n^2)^{2i}\]
is non-negative. $F$ is $(Z_i)$-concave-sufficient and even symmetric. First we show that $\inf_{x\in \R^k} f > -\infty$. Since $F$ is in particular $(Z_1,Z_i)$-concave-sufficient, it suffices to show that
\[
F_\lambda:=F(\underbrace{X,\dots,X}_{\lambda_1-\text{times}},\underbrace{Y,\dots,Y}_{\lambda_2-\text{times}},\underbrace{0,\dots,0}_{\lambda_3-\text{times}})
\]
is bounded from below for all partitions $\lambda_1+\lambda_2+\lambda_3=n$. Since $F_\lambda$ is homogeneous it suffices to show that the dehomogenization
\[\tilde{F_\lambda}=F_\lambda(X,1)\]
has positive leading coefficient. It has leading coefficient
\[\binom{n}{j}^{2i} \binom{\lambda_1}{i}^{2j} - \binom{n}{i}^{2j} \binom{\lambda_1}{j}^{2i}> 0\]
for $i\leq\lambda_1<n$ (this can be easily shown by induction on $\lambda_1$) and $\tilde{F_\lambda}=0$ for $\lambda_1=n$ and for $\lambda_1< i$. Now we can use Theorem \ref{thm:4} $(4)$ and Remark \ref{evensymmetric}, so it suffices to check that
\[
F_\mu:=F(\underbrace{X,\dots,X}_{\mu-\text{times}},\underbrace{0,\dots,0}_{(n-\mu)-\text{times}})
\]
is non-negative for all partitions $\mu+n-\mu=n$. Since $F_\mu$ is homogeneous it suffices to show that the dehomogenization
\[\tilde{F_\mu}=F_\mu(1)=\begin{cases}
\binom{n}{j}^{2i} \binom{\mu}{i}^{2j} - \binom{n}{i}^{2j} \binom{\mu}{j}^{2i}, & \text{for } i\leq \mu < n\\
0, & \text{else}
\end{cases}\]
is non-negative.
\end{example}

It is interesting to notice that the idea of certifying symmetric inequalities in the way sketched has been done albeit not as general. For example, the main Lemma \cite[Lemma 2.4]{mitev2003new} used to prove some new inequalities between elementary symmetric polynomials can be seen as a special case of Remark \ref{evensymmetric} for $Z_1$-quasi-sufficient even symmetric polynomials. To close we remark that, indeed, our setup also recovers as a special instance of Corollary \ref{cor:thm:4} together with Proposition \ref{prop:(half-)degree} the so called Degree and Half-Degree Principle shown in \cite{timofte2003positivity}.

\begin{corollary}[Degree Principle]
Let $S\subseteq \R^n$ be a symmetric semi-algebraic set, which can be described by symmetric polynomials of degree at most $d$. Then $S$ is empty, if and only if $S \cap \A_d$ is empty.
\end{corollary}

\begin{corollary}[Half-Degree Principle]
Let $f \in \R[\X]$ be symmetric of degree $d$. Then $f$ is $k$-complete, where $k:=\max\left\{2,\floor*{\frac{d}{2}}\right\}$.
\end{corollary}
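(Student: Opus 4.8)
The plan is to combine the previous proposition, which identifies the relevant collection of linear forms, with Corollary \ref{cor:notcompact} about quasi-sufficient polynomials. First I would recall that by the preceding proposition any symmetric $f\in\R[\X]$ of degree $d$ is $(Z_1,\dots,Z_{\floor*{d/2}})$-quasi-sufficient: writing $f=g(e_1,\dots,e_n)$ in the elementary symmetric polynomials, each monomial $e_1^{\alpha_1}\cdots e_n^{\alpha_n}$ appearing in $g$ has weighted degree $\sum i\alpha_i\leq d$, so after collecting terms one obtains $f=f_0+f_1e_1+\dots+f_ne_n$ where each $f_j$ only involves $e_1,\dots,e_{\floor*{d/2}}$ — the point being that once one factor $e_j$ with $j>\floor*{d/2}$ is pulled out, the remaining weighted degree is strictly less than $j\leq d-j<j$ forces any further $e_i$ to have $i\leq d-j\leq\floor*{d/2}$. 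Hence $f$ fits the hypothesis of Corollary \ref{cor:notcompact} with $k=\floor*{d/2}$ and the linear forms $l_i=Z_i$.

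Next I would split into cases according to the value of $k:=\max\{2,\floor*{d/2}\}$. If $\floor*{d/2}\geq 2$, i.e. $d\geq 4$, then $k=\floor*{d/2}$ and since $Z_1,Z_2\in\Span(Z_1,\dots,Z_k)$ we may invoke Corollary \ref{cor:notcompact}(3) directly to conclude that $f$ is $k$-complete. If $d\leq 3$, then $f$ is $(Z_1)$-quasi-sufficient (for $d\in\{2,3\}$) or even constant (for $d\leq 1$); in either case $f$ is in particular $(Z_1,Z_2)$-quasi-sufficient, and since $Z_1,Z_2\in\Span(Z_1,Z_2)$, Corollary \ref{cor:notcompact}(3) again gives that $f$ is $2$-complete. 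Either way $f$ is $\max\{2,\floor*{d/2}\}$-complete, which is the claim.

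The only genuine content beyond bookkeeping is the weighted-degree argument establishing quasi-sufficiency, and this is exactly the preceding proposition, which is already stated. So the main (and essentially only) obstacle is making sure the edge cases $d\leq 3$ are handled: there $\floor*{d/2}<2$, so one is really using that a polynomial which is $(l_1,\dots,l_k)$-quasi-sufficient is automatically $(l_1,\dots,l_{k'})$-quasi-sufficient for any $k'\geq k$ (provided the extra forms are linearly independent of the originals), together with the fact that fixing the first two coefficients makes every hyperbolic slice compact by Lemma \ref{compact}. Once that monotonicity-in-$k$ remark is in place, the corollary is immediate from Corollary \ref{cor:notcompact}.
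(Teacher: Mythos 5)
Your proposal is correct and follows essentially the same route as the paper: the preceding Proposition gives $(Z_1,\dots,Z_{\floor*{\frac{d}{2}}})$-quasi-sufficiency, and since $Z_1,Z_2$ lie in the span of the chosen forms (using $k=2$ when $d\leq 3$), compactness of the slices via Lemma \ref{compact} yields $k$-completeness through Corollary \ref{compact k-complete}, which is exactly what Corollary \ref{cor:notcompact}(3) packages. The only blemish is the garbled inequality in your weighted-degree aside (it should read: the remaining weighted degree is at most $d-j<d/2<j$), but the underlying argument and the handling of the edge cases $d\leq 3$ are sound.
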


We remark that it is known to be NP-hard already for quartics to decide non-negativity (see e.g. \cite{blum1998complexity} or \cite{murty1985some}). However, for univariate polynomials non-negativity can be certified via a sums of squares decomposition. Such a decomposition can be efficiently obtained via semi-definite programming. The feasible region of a semi-definite program is given by a linear matrix inequality (LMI), i.e., an inequality of the form $A_0+x_1A_1+x_2A_2+\ldots+x_nA_n\succeq 0$,
where $A_0,\ldots, A_{n}$ are real symmetric matrices all of the same size and $x_1,\ldots,x_n$ are supposed to be real scalars. Now for a symmetric $1$-complete polynomial of degree $2d$ we have that $f$ is non-negative if and only if the univariate polynomial $\tilde f:=f(T,T,\ldots,T)$ of same degree is non-negative. This in turn is the case, if and only if there exists a symmetric matrix $A\in \R^{(d+1)\times (d+1)}$ which is non-negative and for which we have $\tilde{f}=(1,T,T^2,\ldots,T^n) \cdot A\cdot(1,T,T^2,\ldots,T^n)^t$. Therefore, non-negativity of a $1$-complete symmetric polynomial can be decided with semi-definite programming. 
This motivates the following sufficient criterion for $1$-complete polynomials.
\begin{theorem}\label{thm:1suff}
Let $l\in \R[Z_1,\dots,Z_n]_1$ be linear and homogeneous, say $l=\lambda_1 Z_1+\cdots+\lambda_n Z_n$ for some $\lambda_1,\dots,\lambda_n\in \R$. Let $f$ be a $l$-sufficient symmetric polynomial. Let $m$ denote the largest index $i$ of the non-zero $\lambda_i$, i.e., $m:=\max\left\{i\in\{1,\dots,n\} ~|~ \lambda_i\neq 0 \right\}$. If $m$ is odd, then $f$ is $1$-complete.
\end{theorem}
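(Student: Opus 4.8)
The plan is to observe that $l$-sufficiency with a \emph{single} linear form forces $f$ to be a univariate polynomial function of $\ell := \lambda_1 e_1 + \cdots + \lambda_n e_n$, and then to exploit that $\ell$ already attains all of $\R$ on the diagonal $\A_1$ as soon as $m$ is odd.

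First I would unwind the definitions. Since $f$ is $l$-sufficient, write $f = g(e_1,\dots,e_n)$ with $g \in \R[l]$; as $l$ is a single nonzero linear form, $\R[l]$ is isomorphic to a univariate polynomial ring, so $g = G(l)$ for some $G \in \R[Y]$, and hence $f = G(\ell)$ with $\ell := \sum_{i=1}^n \lambda_i e_i \in \R[\X]$. Consequently $f(\R^n) = G(\ell(\R^n))$ and $f(\A_1) = G(\ell(\A_1))$, so it suffices to prove $\ell(\R^n) = \ell(\A_1)$.

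Next I would restrict $\ell$ to the diagonal. A point of $\A_1$ has the form $(t,\dots,t)$ with $e_i(t,\dots,t) = {n \choose i} t^i$, so $\ell(t,\dots,t) = p(t)$ where $p(t) := \sum_{i=1}^n \lambda_i {n \choose i} t^i \in \R[t]$. Because $1 \leq m \leq n$ we have ${n \choose m} \neq 0$, and $\lambda_m \neq 0$ by definition of $m$, so $p$ has degree exactly $m$. If $m$ is odd, then $p$ is a real univariate polynomial of odd degree, hence tends to $+\infty$ at one end of the real line and to $-\infty$ at the other, and is therefore surjective onto $\R$ by the intermediate value theorem. Thus $\ell(\A_1) = p(\R) = \R$; since moreover $\ell(\R^n) \subseteq \R$, we conclude $\ell(\R^n) = \R = \ell(\A_1)$, and therefore $f(\R^n) = G(\R) = f(\A_1)$, i.e.\ $f$ is $1$-complete.

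The point worth stressing is that, unlike the other $k$-completeness statements in this section, this one needs no input from the geometry of hyperbolic slices or from the existence of local extreme points: the restriction to $\A_1$ already sees all of $\R$ through the odd-degree polynomial $p$. The only mild subtleties are the reduction $g = G(l)$, which merely uses $\R[l]\cong\R[Y]$ for a nonzero linear form, and checking that $p$ genuinely has degree $m$, which is immediate from $1\le m\le n$; I do not expect any real obstacle beyond making these observations.
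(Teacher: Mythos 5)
Your proof is correct and takes essentially the same route as the paper's: both reduce to the diagonal, where $l(e_1,\dots,e_n)$ restricts to the odd-degree polynomial $p(t)=\sum_{i=1}^m \lambda_i \binom{n}{i} t^i$, and use that $p$ hits every target value $a=l(e_1(x),\dots,e_n(x))$ to produce a point $(y,\dots,y)\in\A_1$ with $f(y,\dots,y)=f(x)$. The only difference is presentational (you argue surjectivity of $\ell$ on $\A_1$ globally, the paper solves $p(y)=a$ pointwise and phrases it via $\H_l^1(a)\neq\emptyset$), so there is nothing substantive to add.
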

\begin{proof}
Write $f$ as $f:=g(l(e_1\dots,e_n))$ for some univariate polynomial $g$. Let $x\in \R^n$ and define $a:=l(e_1(x),\dots,e_n(x)) \in \R$. We will show that $\H_l^1(a)\neq \emptyset$. Consider the univariate polynomial
\[p:= \sum_{i=1}^m \lambda_i \binom{n}{i} T^i - a \in \R[T]. \]
Since $m$ is odd, $p$ has a real zero $y\in \R$. Consider now $z=(z_1,\dots,z_n)\in \R^n$ defined by $z_i:=\binom{n}{i} y^i$. Then $z \in \H_l^1(a)$ by construction. Now
\[f(x)=g(a)=g(l(z_1,\dots,z_n))=f(y,\dots,y).\]
\end{proof}

Convex sets for which membership can be described via semi-definite programming, i.e., which are projections of feasibility regions of semi-definite programs are called \emph{spectrahedral shadows}. Recently, Scheiderer \cite{scheiderer2018spectrahedral} was able to show that in general the cone of positive semi-definite forms is not in general a spectrahedral shadow. Using Corollary \ref{cor:thm:4} and Remark \ref{evensymmetric} we can identify families of convex cones of (even-)symmetric positive semi-definite forms which are spectrahedral shadows, generalizing Theorem 4.29 in \cite{debus2020reflection}.

\begin{proposition}\label{cor:spectrahedral}
    Let $\P_{2d}$ denote the convex cone of positive semi-definite $n$-ary forms of degree $2d$ and $2\leq j\leq n$. Then, the subcones of all $(Z_1,Z_j)$-sufficient and $(Z_1,Z_2)$-quasi-sufficient symmetric forms are spectrahedral shadows. Similarly, the subcone of all $(Z_1,Z_j)$-quasi-sufficient even-symmetric forms is a spectrahedral shadow.
\end{proposition}
\begin{proof}
    All forms in the mentioned subcones are $2$-complete by Corollary \ref{cor:thm:4} and Remark \ref{evensymmetric}. Therefore non-negativity can be decided by restricting to $\A_2$, respectively $\B_2$. Dehomogenizing the resulting binary forms we obtain univariate polynomials, which are non-negative if and only if they are sums of squares.
\end{proof}

\section{Conclusion and open questions}\label{sec:con}
We have defined the notion of hyperbolic slices and showed that the local extreme points of such slices correspond to hyperbolic polynomials with few distinct roots. We show that generically these hyperbolic slices contain at most finitely many local extreme points. We expect that this holds generally, i.e., also in those cases when the $k$-boundary is not finite. In particular, we expect that the convex hull of each connected component of any hyperbolic slice is a polyhedron. Arnold and Giventhal \cite{arnol1986hyperbolic,givental1987moments} had shown that the hyperbolic slices which are obtained by fixing the first $k$ coefficients are contractible. Our examples show that hyperbolic slices are in general neither connected nor compact and therefore in particular not contractible. It would be very interesting to study the topological properties of these sets. Similarly to the results in \cite{basu}, an understanding of the topology of these slices might allow for new efficient algorithms to compute the homology of symmetric semi-algebraic sets defined by $k$-complete polynomials. 
Furthermore, the definition of hyperbolic slices naturally involved elementary symmetric polynomials. From the viewpoint of symmetric polynomials, it seems interesting to study analogous sets for different choices of $n$ symmetric polynomials which generate all symmetric polynomials. For example, the first author observed in \cite{riener2016symmetric} that symmetric polynomials defined by any $k$ Newton sums are at least $(2k+1)$-complete. Finally, a natural question is to explore the connections to invariant polynomials of other groups, most notably finite reflection groups. In \cite{raman,velasco} the authors showed that the image of polynomial functions invariant by a finite reflection group can be described by the points on flats in the hyperplane arrangement, if the degree is sufficiently small. We expect that the notions and techniques presented here can be transferred also to this more general setup.

\section*{Acknowledgments}
This work has been supported by the Tromsø Research Foundation (grant agreement 17matteCR). The authors would like to thank Philippe Moustrou for his valuable comments on the manuscript as well as an anonymous referee whose suggestions and remarks on a previous version of this article gave important impulses. 
\bibliographystyle{abbrv}
\bibliography{references}
\end{document}